\newtheorem{lem}{Lemma} 
\newtheorem{theorem}{Theorem}
\newtheorem{assump}{Assumption}
\def\ln{{\rm ln}}
\def\mc{\mathcal}
\def\mb{\mathbf}
\def\mbb{\mathbb}
\def\ra{\rightarrow}
\def\mbb{\mathbb}
\def\mb{\mathbf}
\def\mc{\mathcal}
\def\ul{\underline}
\def\bs{\boldsymbol}
\def\ol{\overline}
\newcommand{\mn}[1]{{\left\vert\kern-0.25ex\left\vert\kern-0.25ex\left\vert\kern0.3ex #1 
    \kern0.3ex\right\vert\kern-0.25ex\right\vert\kern-0.25ex\right\vert}}
\begin{document}
	\title{A linear algorithm for optimization over directed graphs with geometric convergence} 
	\author{Ran Xin,~\emph{Student Member,~IEEE},  and Usman A. Khan,~\emph{Senior Member,~IEEE}
\thanks{R.~Xin and U.~A.~Khan are with the Department of Electrical and Computer Engineering, Tufts University, 161 College Ave, Medford, MA 02155; {\texttt{ran.xin@tufts.edu, khan@ece.tufts.edu}}. This work has been partially supported by an NSF Career Award \# CCF-1350264.}}

\maketitle

\begin{abstract}
In this letter, we study distributed optimization, where a network of agents, abstracted as a directed graph, collaborates to minimize the average of locally-known convex functions. Most of the existing approaches over directed graphs are based on push-sum (type) techniques, which use an independent algorithm to asymptotically learn either the left or right eigenvector of the underlying weight matrices. This strategy causes additional computation, communication, and nonlinearity in the algorithm. In contrast, we propose a linear algorithm based on an inexact gradient method and a gradient estimation technique. Under the assumptions that each local function is strongly-convex with Lipschitz-continuous gradients, we show that the proposed algorithm geometrically converges to the global minimizer with a sufficiently small step-size. We present simulations to illustrate the theoretical findings.

{\color{black}\keywords 		Distributed optimization, directed graphs}
\end{abstract}
	
\section{Introduction}\label{s1}
In this letter, we consider distributed optimization over multi-agent networks. Formally, each agent~$i$ has access only to a private function,~$f_i:\mbb{R}^p\rightarrow\mbb{R}$. The goal is to minimize the average of {\color{black}these functions},~$\frac{1}{n}\sum_{i=1}^nf_i(\mb{x})$, via information exchange among the agents. We focus on the case where the communication network is described by {\color{black} an arbitrary \emph{directed} graph}. Early work on distributed optimization {\color{black}includes distributed sub-gradient descent (DGD)~\cite{uc_Nedic},} which converges to the optimal solution at a sublinear rate, i.e.,~$O(\frac{\ln k}{\sqrt{k}})$ for arbitrary (possibly non-differentiable) convex functions and~$O(\frac{\ln k}{k})$ for strongly-convex functions, where~$k$ is the number of iterations. These methods are slow due to the diminishing step-sizes. With the help of strong-convexity and Liptschiz-continuous gradients, algorithms with faster convergence rates have been developed. In particular, DGD with a constant step-size~\cite{DGD_Yuan} converges geometrically to an error ball around the optimal solution. Another method, EXTRA~\cite{EXTRA}, achieves geometric convergence to the global optimal solution with the requirement of symmetric weights. Of relevance are Refs.~\cite{xu2015augmented,Augmented_EXTRA,xu2018convergence,GQu_nesterov}, which combine inexact gradient methods and a gradient estimation technique based on dynamic average consensus~\cite{zhu2010discrete}. Additional related work and applications can be found in~\cite{6119236,jakovetic2017unification,RajaBajwa.ITSP16,8123915,8264076,YING2018253}.

All of the aforementioned methods require the underlying graphs to be undirected or weight-balanced. This requirement, however, may not be practical, for example, when the agents broadcast at different power levels {\color{black}leading to communication capability in one direction but not in the other.} It is natural thus to develop optimization and learning algorithms that are applicable to directed graphs. The primary challenge in dealing with directed graphs is that it may not be possible to construct doubly-stochastic weight matrices for information fusion. The weighted adjacency matrix for directed graphs, in general, may only be either row-stochastic or column-stochastic, but not both. See{\color{black}~\cite{gharesifard2012distributed}} for work on balancing the weights in strongly-connected directed graphs. 

{\color{black}The existing approaches for optimization over directed graphs are motivated by combining average-consensus methods developed for directed graphs with optimization algorithms designed for undirected graphs.} For instance, subgradient-push introduced in~\cite{opdirect_Tsianous} and further studied in~\cite{opdirect_Nedic} combines push-sum consensus~\cite{ac_directed0} and DGD; {\color{black}A linear algorithm over directed graphs, called Directed-Distributed Gradient Descent (D-DGD)}, was introduced in~\cite{D-DGD,D-DPS}, and is based on surplus consensus~\cite{ac_Cai1} and DGD. Such DGD-based methods, however, restricted by the diminishing step-size, converge relatively slowly at~$O(\frac{\ln k}{\sqrt{k}})$ for general convex functions and~$O(\frac{\ln k}{k})$ for strongly-convex functions. The convergence rate has been recently improved in DEXTRA~\cite{DEXTRA}, which converges geometrically to the global optimal given that its step-size lies in an interval and the objective functions are strongly-convex with Lipschitz-continuous gradients. DEXTRA was subsequently improved in ADD-OPT/Push-DIGing~\cite{add-opt,opdirect_nedicLinear}, which geometrically converges with a sufficiently small step-size. The implementation of DEXTRA and {\color{black}ADD-OPT/Push-DIGing} requires each agent to know its out-degree in order to construct a column-stochastic weight matrix. This requirement is later removed in~\cite{linear_row} and FROST~\cite{xin2018fast}, which use row-stochastic weights and {\color{black}thus require no knowledge of out-degrees as each agent locally decides weights assigned to the incoming information.} What is common among these fast methods over directed graphs is that they all are based on {\color{black}push-sum (type) techniques, which make the resulting algorithm nonlinear because an independent algorithm} is used to asymptotically learn either the right or the left eigenvector, corresponding to the eigenvalue of~$1$, of the weight matrix. {\color{black}This strategy causes additional computation and communication on the agents.}

In this paper, we provide  a \textit{linear} distributed optimization algorithm that converges geometrically to the global optimal with a sufficiently small step-size and when the objective functions are strongly-convex with Lipschitz-continuous gradients. In the rest of the paper,  Section~\ref{s2} provides the algorithm development and its relationship with existing approaches, while Section~\ref{s3} details the convergence analysis. Section~\ref{s4} presents numerical experiments and Section~\ref{s5} concludes the paper.

\textbf{Basic Notation:} We use lowercase bold letters to denote vectors and uppercase italic letters to denote matrices. The matrix,~$I_n$, represents the~$n\times n$ identity, whereas~$\mb{1}_n$ is the~$n$-dimensional column vector of all~$1$'s. For an arbitrary vector,~$\mb{x}$, we denote its~$i$th element  by~$[\mb{x}]_i$. We denote by~$X\otimes Y$, the Kronecker product of two matrices,~$X$ and~$Y$. {\color{black}For a matrix,~$X$, we denote~$\rho(X)$ as its spectral radius and~$X_\infty$ as its infinite power (if it exists), i.e.,~$X_\infty=\lim_{k\ra\infty}X^k$.} For a primitive, row-stochastic matrix,~$\ul{A}$, we denote its left and right eigenvectors corresponding to the eigenvalue of~$1$ by~$\bs{\pi}_r$ and~$\mb{1}_n$, respectively, such that~$\bs{\pi}_r^\top\mb{1}_n = 1$. Similarly, for a primitive, column-stochastic matrix,~$\ul{B}$, we denote its left and right eigenvectors corresponding to the eigenvalue of~$1$ by~$\mb{1}_n$ and~$\bs{\pi}_c$, respectively, such that~$\mb{1}_n^\top\bs{\pi}_c = 1$. The notation~$\|\cdot\|_2$ denotes the Euclidean norm of vectors and~$\mn{\cdot}_2$ denotes the spectral norm of matrices.

\section{Algorithm Development}\label{s2}
In this section, we mathematically formulate the optimization problem and describe the proposed algorithm and its relationship with the existing methods. Consider a network of~$n$ agents whose communication links are described by a strongly-connected directed graph,~$\mc{G}=(\mc{V},\mc{E})$, where~$\mc{V}$ is the index set of agents, and~$\mc{E}$ is the collection of ordered pairs,~$(i,j),i,j\in\mc{V}$, such that agent~$j$ can send information to agent~$i$, i.e.,~$j\rightarrow i$. We define~$\mc{N}_i^{{\scriptsize \mbox{in}}}$ as the collection of in-neighbors, i.e., the set of agents that can send information to agent~$i$. Similarly,~$\mc{N}_i^{{\scriptsize \mbox{out}}}$ is the set of out-neighbors of agent~$i$. Note that both~$\mc{N}_i^{{\scriptsize \mbox{in}}}$ and~$\mc{N}_i^{{\scriptsize \mbox{out}}}$ include node~$i$. We assume that each agent~$i$ knows\footnote{Such an assumption is standard in the related literature, see, e.g.,~\cite{opdirect_Nedic,opdirect_Tsianous,ac_Cai1,D-DGD,D-DPS,DEXTRA,add-opt}.} its out-degree (the number of out-neighbors), denoted by~$|\mc{N}_i^{{\scriptsize \mbox{out}}}|$; see~\cite{bullo_book} for details. 

We focus on solving a convex optimization problem distributed over the above multi-agent network. In particular, the network of agents cooperatively solves the following:
\begin{align}
\mbox{P1}:
\quad\mbox{min  }&F(\mb{x})=\frac{1}{n}\sum_{i=1}^nf_i(\mb{x}),\nonumber
\end{align}
where each~$f_i:\mbb{R}^p\rightarrow\mbb{R}$ is known only to agent~$i$. We assume that each local function,~$f_i(\mb{x})$, is strongly-convex and has Lipschitz-continuous gradients. Our goal is to design a distributed algorithm such that the {\color{black}iterates at each agent converge} to the global optimal solution of Problem P1 via information exchange with nearby agents over the directed graph,~$\mc{G}$. We formalize the set of assumptions as follows. 
\begin{assump}\label{asp1}
	The  graph,~$\mc{G}$, is strongly-connected and each agent in the network knows its out-degree.
\end{assump}

\begin{assump}\label{asp2}
	Each local function,~$f_i$, is strongly-convex, and has globally Lipschitz-continuous gradient, i.e., for any~$i$ and~$\mb{x}_1, \mb{x}_2\in\mbb{R}^p$,
	\begin{enumerate}[(i)]
		\item there exists a positive constant~$\beta$ such that 	
		$$\qquad\|\mb{\nabla} f_i(\mb{x}_1)-\mb{\nabla} f_i(\mb{x}_2)\|_2\leq \beta\|\mb{x}_1-\mb{x}_2\|_2;$$
		\item there exists a positive constant~$\alpha$ such that		
		$$f_i(\mb{x}_1)-f_i(\mb{x}_2)\leq\mb{\nabla} f_i(\mb{x}_1)^\top(\mb{x}_1-\mb{x}_2)-\frac{\alpha}{2}\|\mb{x}_1-\mb{x}_2\|_2^2.$$
	\end{enumerate}
	Clearly, the Lipschitz-continuity and strongly-convexity constants for the global objective function~$F(\mb{x})$ are~$\beta$ and~$\alpha$, respectively. Assumption~\ref{asp2} ensures that the optimal solution, {\color{black}denoted as~$\mb{x}^*$}, for P1 exists and is unique.
\end{assump}

\textbf{Algorithm description:} To solve Problem P1, we propose the following algorithm. Each agent,~$i\in\mc{V}$, maintains two variables:~$\mb{x}_{i}(k)$,~$\mb{y}_{i}(k)\in\mbb{R}^p$, where $k$ is discrete-time index. The algorithm, initialized with~$\mb{y}_i(0)=\nabla f_i(\mb{x}_i(0))$ and {\color{black}with arbitrary~$\mb{x}_i(0),\forall i$}, performs the following iterations. 
\begin{subequations}\label{alg1}
	\begin{align}
	\mb{x}_i(k+1)=&{\color{black}\sum_{j=1}^{n}}a_{ij}\mb{x}_{j}(k)-\eta\mb{y}_i(k),\label{alg1a}\\
	\mb{y}_i(k+1)=&{\color{black}\sum_{j=1}^{n}}b_{ij}\Big(\mb{y}_j(k)+\nabla f_j\big(\mb{x}_j(k+1)\big)-\nabla f_j\big(\mb{x}_j(k)\big)\Big),\label{alg1d}
	\end{align}
\end{subequations}
where the step-size,~$\eta$, is some positive constant. The weights,~$a_{ij}$'s and~$b_{ij}$'s satisfy the following conditions:
\begin{align} 
a_{ij}&=\left\{
\begin{array}{rl}
>0,&j\in\mc{N}_i^{{\scriptsize \mbox{in}}},\\
0,&\mbox{otherwise},
\end{array}
\right.
\quad
\sum_{j=1}^na_{ij}=1,\forall i\begin{color}{black},\end{color} \label{a}
\end{align} 
\begin{align} 
b_{ij}&=\left\{
\begin{array}{rl}
>0,&i\in\mc{N}_j^{{\scriptsize \mbox{out}}},\\
0,&\mbox{otherwise},
\end{array}
\right.
\quad
\sum_{i=1}^nb_{ij}=1,\forall j. \label{b}
\end{align} 
Eq.~\eqref{a} leads to a row-stochastic matrix~$\ul{A}=\{a_{ij}\}$, which is easy to implement as each agent locally decides the weights. Eq.~\eqref{b}, on the other hand, results in a column-stochastic matrix~$\ul{B}=\{b_{ij}\}$, whose distributed implementation only requires each agent to know its out-degree. In particular, we can construct such weights as~$b_{ij}=1/|\mc{N}_j^{{\scriptsize \mbox{out}}}|,\forall i,j$.

The algorithm in Eqs.~\eqref{alg1} can be explained as follows. To implement Eq.~\eqref{alg1a}, the receiving agent~$i$ decides on the weights~$a_{ij}$ assigned to the incoming~$\mb{x}_j(k)$'s such that~$a_{ij}$'s sum to~$1$. Implementation of Eq.~\eqref{alg1d} requires the sending agent to scale the transmission~$\mb{y}_j(k)+\nabla f_j\big(\mb{x}_j(k+1)\big)-\nabla f_j\big(\mb{x}_j(k)$ by appropriate choice of~$b_{ij}$'s (to ensure column-stochasticity of~$\ul{B}$) as the out-degree of agent~$j$ may not be known to agent~$i$. Agent~$i$ subsequently adds these received messages to implement Eq.~\eqref{alg1d}. Intuitively, Eq.~\eqref{alg1d} asymptotically learns the average,~$\frac{1}{n}\sum_{i=1}^{n}\nabla f_i(\mb{x}_i(k))$, of the local gradients,{\color{black}~\cite{xu2015augmented,xu2018convergence,Augmented_EXTRA,GQu_nesterov,zhu2010discrete}}; and thus~Eq.~\eqref{alg1a} approaches a centralized gradient descent, as the descent direction,~$\mb{y}_i(k)$, becomes the gradient of the global objective function over time.

\textbf{Relation with existing work:} We now briefly compare the proposed algorithm with existing techniques. {\color{black}The algorithms in Refs.~\cite{Augmented_EXTRA, xu2015augmented,xu2018convergence},} can be summarized as a single class of algorithms over undirected graphs with the following form:
\begin{subequations}\label{alg2}
	\begin{align}
	\mb{x}_i(k+1)=&\sum_{j=1}^{n}w_{ij}\mb{x}_{j}(k)-\eta\mb{y}_{i}(k),\label{alg2a}\\
	\mb{y}_i(k+1)=&\sum_{j=1}^{n}w_{ij}\mb{y}_j(k)+\nabla f_i\big(\mb{x}_i(k+1)\big)-\nabla f_i\big(\mb{x}_i(k)\big),\label{alg2d}
	\end{align}
\end{subequations}
where~$W=\{w_{ij}\}$ is doubly-stochastic. It is shown in Ref.~\cite{Augmented_EXTRA,xu2018convergence}, that Eqs.~\eqref{alg2} converge geometrically to the optimal solution of Problem P1 as long as the step-size,~$\eta$, is sufficiently small. This algorithm, however, is not applicable to directed graphs as it may not be possible to construct doubly-stochastic weights. 

To overcome this issue, Refs.~\cite{add-opt,opdirect_nedicLinear,linear_row,xin2018fast} leverage push-sum (type) techniques, with either row- or column-stochastic weights, towards the algorithm in Eqs.~\eqref{alg2}. Refs.~\cite{linear_row,xin2018fast}, e.g., propose the following algorithm:
\begin{align*}
\mb{y}_i(k+1)=&\sum_{j=1}^na_{ij}\mb{y}_i(k),\\
\mb{x}_i(k+1)=&\sum_{j=1}^na_{ij}\mb{x}_i(k)-\begin{color}{black}\eta_i\end{color}\mb{z}_i(k),\\
\mb{z}_i(k+1)=&\sum_{j=1}^na_{ij}\mb{z}_i(k) +\frac{\nabla f_i\big(\mb{x}_i(k+1)\big)}{[\mb{y}_i(k+1)]_i}-\frac{\nabla f_i\big(\mb{x}_i(k)\big)}{[\mb{y}_i(k)]_i},
\end{align*}
{\color{black}where $\ul{A}=\{a_{ij}\}$ is row-stochastic}. Note that the first equation is an independent algorithm, which asymptotically learns the left eigenvector, corresponding to the eigenvalue of~$1$, of~$\ul{A}$. However, it adds nonlinearity to the overall algorithm along with additional computation and communication costs in contrast to the proposed algorithm in Eqs.~\eqref{alg1}. 

\textbf{Remarks:} \begin{color}{black}The algorithm,  Eqs.~\eqref{alg1}, proposed in this letter can be viewed as related to Eq.~\eqref{alg2} but without doubly-stochastic weights\end{color}, due to which we lose the nice eigenstructure within the weight matrices. It is rather straightforward to notice that a linear extension of Eqs.~\eqref{alg2} to the directed graphs is non-trivial as all earlier attempts were made by adding nonlinearity to the original set of equations. One of the major challenges lies in the fact that even though the contraction of a doubly-stochastic~$W$ is well-established in the subspace orthogonal to~$\mb{1}_n$, it is not straightforward to establish simultaneous contractions for a row-stochastic matrix,~$\ul{A}$, and a column-stochastic matrix,~$\ul{B}$. The latter requires working with arbitrary norms (as opposed to the~$2$-norm applicable to doubly-stochastic matrices) and norm-equivalence constants, as we  show in Lemma~\ref{lem1} and onwards. 
 
\section{Convergence Analysis}\label{s3}
For the sake of analysis, we now write Eqs.~\eqref{alg1} in matrix form. The variables~$\mb{x}(k)$ and~$\mb{y}(k)$ collect all the local variables~$\mb{x}_i(k)$'s and~$\mb{y}_i(k)$'s in a vector, respectively, and 
\begin{eqnarray}\label{not1}
\nabla\mb{f}(k)=
\left[
\begin{array}{c}
\nabla {f}_1\big(\mb{x}_{1}(k)\big)\\
\vdots\\
\nabla {f}_n\big(\mb{x}_{n}(k)\big)
\end{array}
\right]\in \mathbb{R}^{np}.
\end{eqnarray}
Let~$A=\ul{A}\otimes I_p$ and~$B=\ul{B}\otimes I_p$, where $\otimes$ is the Kronecker product. {\color{black}We denote~$\mb{x}^*$ as the optimal solution of Problem P1.} We now rewrite Eqs.~\eqref{alg1} in a compact matrix form as follows:
\begin{subequations}\label{alg1_matrix}
	\begin{align}
	\mb{x}(k+1)=&A\mb{x}(k)-\begin{color}{black}\eta\end{color} \mb{y}(k),\label{alg1_ma}\\
	\mb{y}(k+1)=&B\Big(\mb{y}(k)+\nabla \mb{f}(k+1)-\nabla \mb{f}(k)\Big),\label{alg1_mb}
	\end{align}
\end{subequations}
where~$\mb{y}(0)=\nabla\mb{f}(0)$ and~$\mb{x}(0)$ is arbitrary.

\subsection{Auxiliary relations}
We next start the convergence analysis with a key lemma regarding the contraction in consensus process with row- and column-stochastic weight matrices, respectively. 
\begin{lem}\label{lem1}
	Consider the weight matrices~$A=\ul{A}\otimes I_p$ and~$B=\ul{B}\otimes I_p$. Then there exist vector norms,~$\|\cdot\|_A$ and~$\|\cdot\|_B$, such that for all~$\mb{a}\in\mbb{R}^{np}$,
	\begin{align}\label{A_ctr}
	\left\|A\mb{a}-A_\infty\mb{a}\right\|_A&\leq\sigma_A\left\|\mb{a}-A_\infty\mb{a}\right\|_A,\\\label{B_ctr}
	\left\|B\mb{a}-B_\infty\mb{a}\right\|_B&\leq\sigma_B\left\|\mb{a}-B_\infty\mb{a}\right\|_B,
	\end{align}
	where~$0<\sigma_A<1$ and~$0<\sigma_B<1$ are some constants.  
\end{lem}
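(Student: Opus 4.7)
The plan is to build the norm $\|\cdot\|_A$ out of a standard matrix-analytic fact: for any matrix $M$ with $\rho(M)<1$ and any $\varepsilon>0$, one can find a vector norm whose induced operator norm satisfies $\mn{M}\le\rho(M)+\varepsilon$ (this comes from rescaling the Jordan basis of $M$). So the whole issue is reduced to showing that $\rho(A-A_\infty)<1$, and then plugging the resulting contraction against the ``consensus residual'' $\mathbf{a}-A_\infty\mathbf{a}$.

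First I would record the structural facts about $A_\infty$. Since $\underline{A}$ is primitive and row-stochastic, $\underline{A}_\infty=\mathbf{1}_n\boldsymbol{\pi}_r^\top$ is the rank-one spectral projection onto the eigenspace of the simple eigenvalue $1$. Consequently $A_\infty=\underline{A}_\infty\otimes I_p$ satisfies $A_\infty^2=A_\infty$ and $AA_\infty=A_\infty A=A_\infty$. From these it follows that
\begin{equation*}
(A-A_\infty)(I-A_\infty)=A-A_\infty,
\end{equation*}
so for every $\mathbf{a}\in\mathbb{R}^{np}$,
\begin{equation*}
A\mathbf{a}-A_\infty\mathbf{a}=(A-A_\infty)\bigl(\mathbf{a}-A_\infty\mathbf{a}\bigr).
\end{equation*}
This identity is the key: it lets me replace a bound involving the subspace $\ker A_\infty$ with a clean operator-norm bound on $A-A_\infty$ acting on all of $\mathbb{R}^{np}$.

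Next I would argue that $\rho(A-A_\infty)<1$. Because $A_\infty$ is the spectral projection of $A$ corresponding to the eigenvalue $1$, subtracting it kills that eigenvalue and leaves all other eigenvalues of $A$ unchanged; by primitivity of $\underline{A}$ these remaining eigenvalues all lie strictly inside the unit disk (and the Kronecker product with $I_p$ does not change the spectrum). Hence $\rho(A-A_\infty)<1$. Invoking the matrix-analytic fact above with $\varepsilon$ small enough that $\sigma_A:=\rho(A-A_\infty)+\varepsilon<1$, I obtain a vector norm $\|\cdot\|_A$ with induced norm $\mn{A-A_\infty}_A\le\sigma_A$. Combined with the displayed identity,
\begin{equation*}
\|A\mathbf{a}-A_\infty\mathbf{a}\|_A=\|(A-A_\infty)(\mathbf{a}-A_\infty\mathbf{a})\|_A\le\sigma_A\,\|\mathbf{a}-A_\infty\mathbf{a}\|_A,
\end{equation*}
which is \eqref{A_ctr}.

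The column-stochastic case is completely analogous: $\underline{B}_\infty=\boldsymbol{\pi}_c\mathbf{1}_n^\top$ and $B_\infty=\underline{B}_\infty\otimes I_p$ is again a projection commuting with $B$ and satisfying $BB_\infty=B_\infty B=B_\infty$. The same algebraic identity and spectral-projection argument yield $\rho(B-B_\infty)<1$ and, via an appropriate vector norm $\|\cdot\|_B$, the contraction \eqref{B_ctr} with $\sigma_B=\rho(B-B_\infty)+\varepsilon<1$. I do not expect any real obstacle here beyond stating the Jordan-rescaling fact and verifying the projection identities carefully; the only mildly subtle point is that the norms are not explicit—they depend on the Jordan bases of $A-A_\infty$ and $B-B_\infty$—so downstream arguments will have to carry around norm-equivalence constants, as the authors foreshadow in the preceding remark.
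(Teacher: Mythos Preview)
Your proposal is correct and follows essentially the same route as the paper: establish the projection identities $AA_\infty=A_\infty$, $A_\infty^2=A_\infty$ to get $A\mathbf{a}-A_\infty\mathbf{a}=(A-A_\infty)(\mathbf{a}-A_\infty\mathbf{a})$, observe $\rho(A-A_\infty)<1$ by Perron--Frobenius, and then invoke the Jordan-rescaling norm construction (the paper cites this as Lemma~5.6.10 and Theorem~5.7.13 in Horn--Johnson) to produce $\|\cdot\|_A$ with $\mn{A-A_\infty}_A<1$. The only cosmetic difference is that the paper first builds a matrix norm and then passes to a compatible vector norm, whereas you go straight to the induced vector norm; the content is the same.
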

\begin{proof}
Since~$\ul{A}$ is irreducible, row-stochastic with positive diagonals, from Perron-Frobenius theorem we have that~$\rho(\ul{A})=1$, every eigenvalue of~$\ul{A}$ other than~$1$ is strictly less than~$\rho(\ul{A})$, and~$\bs{\pi}_r^\top$ is a strictly positive left eigenvector corresponding to the eigenvalue of~$1$ with~$\mb{1}_n^\top\bs{\pi}_r = 1$; thus~$\lim_{k\rightarrow\infty} \ul{A}^k = \mb{1}_n\bs{\pi}_r^\top$. We further have 
\begin{align*}
A_{\infty}=\lim_{k\rightarrow\infty}{A^k}=
\left(\lim_{k\rightarrow\infty}{\ul{A}^k}\right)\otimes I_p=\left(\mb{1}_n\bs{\pi}_r^\top\right)\otimes I_p.
\end{align*} 
It follows that
	\begin{align}
	AA_{\infty} &= (\ul{A}\otimes I_p)\Big((\mb{1}_n\bs{\pi}_r^\top)\otimes I_p\Big) = A_{\infty}, \nonumber \\
	A_{\infty}A_{\infty} &= \Big((\mb{1}_n\bs{\pi}_r^\top)\otimes I_p\Big)\Big((\mb{1}_n\bs{\pi}_r^\top)\otimes I_p\Big)=A_{\infty}. \nonumber
	\end{align}
	Thus~$AA_{\infty}-A_{\infty}A_{\infty}$ is a zero matrix, which leads to the following relation:
	\begin{eqnarray}\label{eq1}
	A\mb{a}-A_\infty \mb{a}=(A-A_{\infty})(\mb{a}-A_{\infty}\mb{a}).
	\end{eqnarray}
\begin{color}{black}Since~$\rho(A-A_{\infty})=\rho((\ul{A}-\mb{1}_n\bs{\pi}_r^\top)\otimes I_p)<1,$ we have from Lemma 5.6.10 in~\cite{hornjohnson:13} that there exists a matrix norm, say~$\mn{\cdot}_A$, such that
\begin{align}
\sigma_A\triangleq\mn{A-A_{\infty}}_A<1.
\end{align}
Moreover, from Theorem 5.7.13 in~\cite{hornjohnson:13}, we know that for any matrix norm,~$\mn{\cdot}_A$, there exists a compatible vector norm, say~$\|\cdot\|_A$, such that~$\|X\mb{x}\|_A\leq\mn{X}_A\|\mb{x}\|_A$, for all matrices,~$X$, and all vectors,~$\mb{x}$; hence, Eq.~\eqref{eq1} leads~to
\vspace{-0.1cm}
\begin{eqnarray*}
\|{A\mb{a}-A_\infty \mb{a}}\|_A&=&\|{(A-A_{\infty})(\mb{a}-A_{\infty}\mb{a})}\|_A,\\ &\leq&\mn{A-A_\infty}_A\|\mb{a}-A_{\infty}\mb{a}\|_A,\\
&=& \sigma_A\|\mb{a}-A_{\infty}\mb{a}\|_A,
\end{eqnarray*}
and Eq.~\eqref{A_ctr} follows. Similarly, Eq.~\eqref{B_ctr} follows for some matrix norm,~$\mn{\cdot}_B$, with~$\sigma_B\triangleq\mn{B-B_{\infty}}_B$. \end{color}
\end{proof}

The following lemma is a direct consequence of the column-stochasticity of~$\ul{B}$ and {\color{black}the initial condition that~$\mb{y}(0)=\nabla\mb{f}(0)$.}
\begin{lem}\label{sum_equ}
	We have~$(\mb{1}_n^\top \otimes I_p) \mb{y}(k) = (\mb{1}_n^\top \otimes I_p) \nabla\mb{f}(k),\forall k$.
\end{lem}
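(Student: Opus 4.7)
The plan is a straightforward induction on the iteration index $k$. The base case $k=0$ is immediate from the initialization $\mb{y}(0)=\nabla\mb{f}(0)$, so $(\mb{1}_n^\top\otimes I_p)\mb{y}(0)=(\mb{1}_n^\top\otimes I_p)\nabla\mb{f}(0)$ holds trivially.

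For the inductive step, I would assume $(\mb{1}_n^\top\otimes I_p)\mb{y}(k)=(\mb{1}_n^\top\otimes I_p)\nabla\mb{f}(k)$ and left-multiply Eq.~\eqref{alg1_mb} by $(\mb{1}_n^\top\otimes I_p)$. The key algebraic fact to invoke is the column-stochasticity of $\ul{B}$, which gives $\mb{1}_n^\top\ul{B}=\mb{1}_n^\top$, and combined with the mixed-product property of the Kronecker product, yields $(\mb{1}_n^\top\otimes I_p)B = (\mb{1}_n^\top\ul{B})\otimes I_p=\mb{1}_n^\top\otimes I_p$. Substituting this into the left-multiplied update, the $B$ disappears and one gets
\[
(\mb{1}_n^\top\otimes I_p)\mb{y}(k+1)=(\mb{1}_n^\top\otimes I_p)\mb{y}(k)+(\mb{1}_n^\top\otimes I_p)\nabla\mb{f}(k+1)-(\mb{1}_n^\top\otimes I_p)\nabla\mb{f}(k).
\]
Applying the inductive hypothesis cancels the first and third terms, leaving exactly $(\mb{1}_n^\top\otimes I_p)\nabla\mb{f}(k+1)$, which closes the induction.

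There is no real obstacle here; the only substantive ingredient beyond bookkeeping is recognizing that column-stochasticity preserves the all-ones left null structure across the Kronecker lift, which is why the gradient tracking update in Eq.~\eqref{alg1_mb} was deliberately designed with $B$ column-stochastic together with the initialization $\mb{y}(0)=\nabla\mb{f}(0)$. This lemma is the discrete analogue of the standard dynamic average consensus invariant and will presumably be used later to relate the average of $\mb{y}(k)$ to the average of the local gradients, which is in turn what drives the descent direction toward the true gradient of $F$.
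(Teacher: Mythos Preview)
Your proposal is correct and matches the paper's proof essentially line for line: both left-multiply the update~\eqref{alg1_mb} by $(\mb{1}_n^\top\otimes I_p)$, use column-stochasticity of $\ul{B}$ (via the Kronecker mixed-product identity) to remove $B$, and then invoke the initialization $\mb{y}(0)=\nabla\mb{f}(0)$. The only cosmetic difference is that the paper unrolls the resulting telescoping recursion back to $k=0$ in one display, whereas you phrase the same computation as an induction.
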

\begin{proof}
	Recall Eq.~\eqref{alg1_mb} and multiply both sides of Eq.~\eqref{alg1_mb} with~$\mb{1}_n^\top \otimes I_p$. We get
	\begin{align}
			&(\mb{1}_n^\top \otimes I_p)\mb{y}(k+1)\nonumber\\
			&=(\mb{1}_n^\top \otimes I_p)(\ul{B}\otimes I_p)\Big(\mb{y}(k)+\nabla \mb{f}(k+1)-\nabla \mb{f}(k)\Big)	\nonumber\\
			&= (\mb{1}_n^\top \otimes I_p)\mb{y}(k) + (\mb{1}_n^\top \otimes I_p)\nabla \mb{f}(k+1)
			- (\mb{1}_n^\top \otimes I_p)\nabla \mb{f}(k)\nonumber\\
			&= (\mb{1}_n^\top \otimes I_p)\Big(\mb{y}(0)-\nabla\mb{f}(0)\Big)+(\mb{1}_n^\top \otimes I_p)\nabla\mb{f}(k+1)\nonumber\\
			&= (\mb{1}_n^\top \otimes I_p)\nabla\mb{f}(k+1), \nonumber
	\end{align}
	which completes the proof.
\end{proof}

\newpage
Lemma~\ref{sum_equ} shows that the average of~$\mb{y}_i(k)$'s preserves the average of local gradients. The next lemma, a standard result in convex optimization theory from~\cite{opt_literature0,Augmented_EXTRA}, states that the distance to the optimal minimizer shrinks by at least a fixed ratio if we perform a gradient descent step.
\begin{lem}\label{centr_d}
Suppose that~$g:\mbb{R}^p\rightarrow\mbb{R}$ is strongly convex with Lipschitz-continuous gradient. Let~$\alpha$ and~$\beta$ be its strong-convexity and Lipschitz-continuity constants respectively. For~$\forall \mb{x}\in\mbb{R}^p$ and~$0<\theta<\frac{2}{\beta}$, we have ~$$\left\|\mb{x}-\theta\nabla g(\mb{x})-\mb{x}^*\right\|_2\leq\tau\left\|\mb{x}-\mb{x}^*\right\|_2,$$ where~$\tau=\max\left(\left|1-\alpha \theta\right|,\left|1-\beta\theta \right|\right)$.
\end{lem}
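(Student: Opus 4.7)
The plan is to reduce the claim to a spectral-norm estimate for an averaged Hessian. Since $\mathbf{x}^*$ minimizes $g$, we have $\nabla g(\mathbf{x}^*)=\mathbf{0}$, so I would first rewrite the step as
\[
\mathbf{x}-\theta\nabla g(\mathbf{x})-\mathbf{x}^* \;=\; (\mathbf{x}-\mathbf{x}^*)-\theta\bigl(\nabla g(\mathbf{x})-\nabla g(\mathbf{x}^*)\bigr).
\]
Assuming $g$ is twice continuously differentiable (the setting of the cited references; the general $C^1$ case follows by a standard mollification argument), I would express the gradient difference via the integrated Hessian
\[
H \;\triangleq\; \int_0^1 \nabla^2 g\bigl(\mathbf{x}^*+t(\mathbf{x}-\mathbf{x}^*)\bigr)\,dt,
\]
so that $\nabla g(\mathbf{x})-\nabla g(\mathbf{x}^*)=H(\mathbf{x}-\mathbf{x}^*)$ and the gradient-descent step collapses to $(I-\theta H)(\mathbf{x}-\mathbf{x}^*)$.

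The core of the argument is the spectral bound on $H$. Strong convexity with modulus $\alpha$ and Lipschitz gradient with modulus $\beta$ together yield $\alpha I \preceq \nabla^2 g(\mathbf{z}) \preceq \beta I$ for every $\mathbf{z}$, and this sandwich is preserved by integration, giving $\alpha I \preceq H \preceq \beta I$. Since $H$ is symmetric, every eigenvalue of $I-\theta H$ lies in $[1-\theta\beta,\,1-\theta\alpha]$, so $\mn{I-\theta H}_2 \leq \max(|1-\theta\alpha|,|1-\theta\beta|)=\tau$. Taking 2-norms on both sides of the rewritten step produces the claimed inequality $\|\mathbf{x}-\theta\nabla g(\mathbf{x})-\mathbf{x}^*\|_2 \leq \tau\,\|\mathbf{x}-\mathbf{x}^*\|_2$.

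To close the argument, I would verify that $\tau<1$ under the prescribed step-size. Since $\alpha\leq\beta$, the condition $0<\theta<2/\beta$ forces $\theta\alpha\leq\theta\beta<2$, so both $|1-\theta\alpha|<1$ and $|1-\theta\beta|<1$. The only real subtlety I anticipate is justifying the integrated-Hessian representation without a $C^2$ assumption on $g$; this is the step I would be most careful about, but it can be sidestepped entirely by invoking the result directly from \cite{opt_literature0,Augmented_EXTRA}, since Lemma~\ref{centr_d} is a standard one-step contraction bound borrowed from there.
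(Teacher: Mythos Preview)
Your argument is correct: the rewriting via $\nabla g(\mathbf{x}^*)=\mathbf{0}$, the integrated-Hessian representation, the eigenvalue sandwich $\alpha I\preceq H\preceq\beta I$, and the resulting bound $\mn{I-\theta H}_2\leq\tau$ are all valid, and you correctly verify $\tau<1$ for $0<\theta<2/\beta$. The $C^2$ caveat you flag is real but, as you note, standard to handle.

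There is nothing to compare against in the paper: Lemma~\ref{centr_d} is not proved there but simply imported as ``a standard result in convex optimization theory from~\cite{opt_literature0,Augmented_EXTRA}.'' Your proposal therefore supplies a self-contained argument where the paper gives none, and it is exactly the classical proof one finds in those references.
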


The subsequent convergence analysis is based on deriving a contraction relationship in the proposed algorithm, i.e.,~$\|\mb{x}(k+1)-A_\infty\mb{x}(k+1)\|_A$,~$\|A_\infty\mb{x}(k+1)-\mb{1}_n \otimes \mb{x}^*\|_2$, and~$\|\mb{y}(k+1)-B_\infty\mb{y}(k+1)\|_B$, are bounded linearly by their values in the last iteration. We capture a relationship on these objects in the next lemmas. Before we proceed, note that all vector norms on finite-dimensional vector space are equivalent, i.e., there exist finite and positive constants,~$c,d,h,l,g,m$, such that:
\begin{align*}
\|\cdot\|_A &\leq c\|\cdot\|_B,~~\|\cdot\|_2 \leq h\|\cdot\|_B,~~\|\cdot\|_2 \leq g\|\cdot\|_A,\\
\|\cdot\|_B &\leq d\|\cdot\|_A,~~\|\cdot\|_B \leq l\|\cdot\|_2,~~\|\cdot\|_A \leq m\|\cdot\|_2.
\end{align*}

\begin{lem} \label{1}
The following inequality holds,~$\forall k$:
 \begin{align}
 \|\mb{x}&(k+1)-A_\infty\mb{x}(k+1)\|_A\nonumber\\ 
\leq&~\sigma_A\|\mb{x}(k)-A_\infty\mb{x}(k)\|_A + \eta m\mn{I_{np}-A_\infty}_2\left\|\mb{y}(k)\right\|_2
\nonumber
 \end{align}
\end{lem}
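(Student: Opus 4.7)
The plan is to start from the matrix update~\eqref{alg1_ma}, namely $\mb{x}(k+1)=A\mb{x}(k)-\eta\mb{y}(k)$, and subtract $A_\infty\mb{x}(k+1)$ from both sides. The key algebraic identity I will need is $A_\infty A = A_\infty$, which follows from the Kronecker structure $A_\infty=(\mb{1}_n\bs{\pi}_r^\top)\otimes I_p$ together with the left-eigenvector property $\bs{\pi}_r^\top \ul{A}=\bs{\pi}_r^\top$ established inside the proof of Lemma~\ref{lem1}. Using this identity, the cross-term simplifies cleanly and I obtain
\[
\mb{x}(k+1)-A_\infty\mb{x}(k+1) \;=\; \bigl(A\mb{x}(k)-A_\infty\mb{x}(k)\bigr) \;-\; \eta\,(I_{np}-A_\infty)\mb{y}(k).
\]

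Next I would apply the triangle inequality in the $\|\cdot\|_A$-norm. For the first summand, Lemma~\ref{lem1} (Eq.~\eqref{A_ctr}) directly gives the contraction $\|A\mb{x}(k)-A_\infty\mb{x}(k)\|_A\leq\sigma_A\|\mb{x}(k)-A_\infty\mb{x}(k)\|_A$, which accounts for the first term on the right-hand side of the claimed bound. For the second summand, the target inequality is expressed in the Euclidean norm with the spectral matrix norm of $I_{np}-A_\infty$, so I will switch norms using the equivalence constants listed just before the lemma: $\|(I_{np}-A_\infty)\mb{y}(k)\|_A\leq m\,\|(I_{np}-A_\infty)\mb{y}(k)\|_2$. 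Then submultiplicativity of the spectral norm with respect to the Euclidean norm yields $\|(I_{np}-A_\infty)\mb{y}(k)\|_2\leq \mn{I_{np}-A_\infty}_2\,\|\mb{y}(k)\|_2$.

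Combining the two bounds gives exactly the statement of the lemma. I do not expect a real obstacle here; the only subtle point is verifying $A_\infty A = A_\infty$ (as opposed to the already-used $A A_\infty = A_\infty$), which is the reason the perturbation $\eta(I_{np}-A_\infty)\mb{y}(k)$ appears cleanly rather than leaving an extra $A_\infty$-dependent cross-term. Everything else is triangle inequality, the contraction of Lemma~\ref{lem1}, and the norm-equivalence constant $m$.
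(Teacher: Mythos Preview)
Your proposal is correct and essentially identical to the paper's proof: substitute the update~\eqref{alg1_ma}, split via the triangle inequality, apply the contraction~\eqref{A_ctr} from Lemma~\ref{lem1} to the $A\mb{x}(k)$-part, and pass to the Euclidean norm on the $\mb{y}(k)$-part via the constant~$m$ followed by submultiplicativity. Your explicit mention of the identity $A_\infty A = A_\infty$ (from $\bs{\pi}_r^\top\ul{A}=\bs{\pi}_r^\top$) is a welcome clarification, since the paper uses it implicitly when going from $A_\infty A\mb{x}(k)$ to $A_\infty\mb{x}(k)$ but only records $AA_\infty=A_\infty$ and $A_\infty A_\infty=A_\infty$ in the proof of Lemma~\ref{lem1}.
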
 
\begin{proof}
Using Eq.~\eqref{alg1_ma} and Lemma.~\ref{lem1}, we have
\begin{align}
 \|\mb{x}&(k+1)-A_\infty\mb{x}(k+1)\|_A \nonumber\\
=&~\|A\mb{x}(k)-\eta \mb{y}(k)-A_{\infty}\Big(A\mb{x}(k)-\eta \mb{y}(k)\Big)\|_A, \nonumber\\
\leq&~ \sigma_A\|\mb{x}(k)-A_\infty\mb{x}(k)\|_A+\eta m\|\mb{y}(k)-A_\infty\mb{y}(k)\|_2,\nonumber\\
\leq&~\sigma_A\|\mb{x}(k)-A_\infty\mb{x}(k)\|_A + \eta m\mn{I_{np}-A_\infty}_2\left\|\mb{y}(k)\right\|_2
 \nonumber
\end{align}
and the lemma follows. 
\end{proof}
Next, we develop a relation for~$\|A_\infty\mb{x}(k+1)-\mb{1}_n \otimes \mb{x}^*\|_2$.

\begin{color}{black}\begin{lem} \label{2}
The following holds,~$\forall k$, when~$0<\eta<\frac{2}{n\beta\bs{\pi}_r^\top\bs{\pi}_c}$:
\begin{align}\label{lem5}
\|A&_\infty\mb{x}(k+1)-\mb{1}_n \otimes \mb{x}^*\|_2\nonumber\\
\leq&~\eta n\beta g(\bs{\pi}_r^\top\bs{\pi}_c)\|\mb{x}(k)-A_\infty\mb{x}(k)\|_A\nonumber\\
&+~\lambda\|A_\infty\mb{x}(k)-\mb{1}_n \otimes \mb{x}^*\|_2 +~\eta h\mn{A_\infty}_2\|\mb{y}(k)-B_{\infty}\mb{y}(k)\|_B,
\end{align}{\color{black}
where~$\lambda=\max\left(\left|1-\alpha n\eta(\bs{\pi}_r^\top\bs{\pi}_c)\right|,\left|1-\beta n\eta(\bs{\pi}_r^\top\bs{\pi}_c) \right|\right)$.}
\end{lem}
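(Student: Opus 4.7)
The plan is to reduce the recursion for $A_\infty \mb{x}(k+1)$ to a perturbed centralized gradient descent step on the weighted network average, and then apply Lemma~\ref{centr_d}. Since $A_\infty=(\mb{1}_n\bs{\pi}_r^\top)\otimes I_p$, I would define the $\bs{\pi}_r$-weighted average $\bar{\mb{x}}(k)=(\bs{\pi}_r^\top\otimes I_p)\mb{x}(k)\in\mbb{R}^p$, so that $A_\infty\mb{x}(k)=\mb{1}_n\otimes\bar{\mb{x}}(k)$ and consequently $\|A_\infty\mb{x}(k)-\mb{1}_n\otimes\mb{x}^*\|_2=\sqrt{n}\,\|\bar{\mb{x}}(k)-\mb{x}^*\|_2$. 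Multiplying Eq.~\eqref{alg1_ma} on the left by $\bs{\pi}_r^\top\otimes I_p$ and using $\bs{\pi}_r^\top\ul{A}=\bs{\pi}_r^\top$ yields the scalar recursion $\bar{\mb{x}}(k+1)=\bar{\mb{x}}(k)-\eta(\bs{\pi}_r^\top\otimes I_p)\mb{y}(k)$.

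Next, I would decompose $(\bs{\pi}_r^\top\otimes I_p)\mb{y}(k)$ by writing it as $(\bs{\pi}_r^\top\otimes I_p)B_\infty\mb{y}(k)+(\bs{\pi}_r^\top\otimes I_p)(\mb{y}(k)-B_\infty\mb{y}(k))$. Using $B_\infty=(\bs{\pi}_c\mb{1}_n^\top)\otimes I_p$ (the analog of the $A_\infty$ computation from Lemma~\ref{lem1}) together with Lemma~\ref{sum_equ}, the first piece collapses to $(\bs{\pi}_r^\top\bs{\pi}_c)\sum_{i=1}^n\nabla f_i(\mb{x}_i(k))$. Then I would split this sum as
\begin{align*}
\sum_{i=1}^n\nabla f_i(\mb{x}_i(k))=n\nabla F(\bar{\mb{x}}(k))+\sum_{i=1}^n\bigl(\nabla f_i(\mb{x}_i(k))-\nabla f_i(\bar{\mb{x}}(k))\bigr),
\end{align*}
which turns the recursion into the centralized gradient step $\bar{\mb{x}}(k)-n\eta(\bs{\pi}_r^\top\bs{\pi}_c)\nabla F(\bar{\mb{x}}(k))$ plus two perturbation terms.

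Subtracting $\mb{x}^*$, taking Euclidean norms, and applying the triangle inequality, Lemma~\ref{centr_d} with effective step-size $\theta=n\eta(\bs{\pi}_r^\top\bs{\pi}_c)$ (valid since $\eta<\frac{2}{n\beta\bs{\pi}_r^\top\bs{\pi}_c}$) contracts the first term by the factor $\lambda$ appearing in the statement. For the gradient-mismatch perturbation I would invoke Assumption~\ref{asp2}(i) and the inequality $\sum_{i=1}^n\|\mb{x}_i(k)-\bar{\mb{x}}(k)\|_2\le\sqrt{n}\,\|\mb{x}(k)-A_\infty\mb{x}(k)\|_2$, then convert the $2$-norm into $\|\cdot\|_A$ via the constant $g$. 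For the residual term, I would use the identity $\sqrt{n}\,\|(\bs{\pi}_r^\top\otimes I_p)\mb{v}\|_2=\|A_\infty\mb{v}\|_2\le\mn{A_\infty}_2\|\mb{v}\|_2$ applied to $\mb{v}=\mb{y}(k)-B_\infty\mb{y}(k)$, then switch to $\|\cdot\|_B$ through the constant $h$.

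Finally I would multiply the resulting bound on $\|\bar{\mb{x}}(k+1)-\mb{x}^*\|_2$ by $\sqrt{n}$; the product $\sqrt{n}\cdot\sqrt{n}=n$ is exactly where the prefactor $n$ in $\eta n\beta g(\bs{\pi}_r^\top\bs{\pi}_c)$ arises, and the $\sqrt{n}$ factor on the residual term is absorbed into $\mn{A_\infty}_2$ via the identity above. The main obstacle I anticipate is bookkeeping: correctly pairing the Kronecker identities $\bs{\pi}_r^\top\ul{A}=\bs{\pi}_r^\top$, $A_\infty B_\infty=(\bs{\pi}_r^\top\bs{\pi}_c)(\mb{1}_n\mb{1}_n^\top)\otimes I_p$, and Lemma~\ref{sum_equ} so that the $\nabla F(\bar{\mb{x}}(k))$ term emerges cleanly with the right coefficient $n\eta(\bs{\pi}_r^\top\bs{\pi}_c)$, and simultaneously tracking the two norm-equivalence conversions so that the constants $g$ and $h$ appear in precisely the positions claimed.
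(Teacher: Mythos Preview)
Your proposal is correct and follows essentially the same route as the paper: both arguments decompose $A_\infty\mb{x}(k+1)-\mb{1}_n\otimes\mb{x}^*$ into a centralized gradient step on the $\bs{\pi}_r$-weighted average (handled by Lemma~\ref{centr_d} with effective step-size $n\eta(\bs{\pi}_r^\top\bs{\pi}_c)$), a gradient-mismatch term controlled via Lipschitz continuity and the norm constant $g$, and a tracking residual $\mb{y}(k)-B_\infty\mb{y}(k)$ bounded through $\mn{A_\infty}_2$ and the constant $h$. The only cosmetic difference is that you project down to the $p$-dimensional average $\bar{\mb{x}}(k)$ and multiply by $\sqrt{n}$ at the end, whereas the paper stays in $\mbb{R}^{np}$ and uses the Kronecker identity $A_\infty B_\infty=(\bs{\pi}_r^\top\bs{\pi}_c)(\mb{1}_n\mb{1}_n^\top)\otimes I_p$ directly; the resulting constants are identical.
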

\end{color}
\begin{proof}
With~$A_\infty=(\mb{1}_n\bs{\pi}_r^\top)\otimes I_p=(\mb{1}_n\otimes I_p)(\bs{\pi}_r^\top\otimes I_p)$ and Eq.~\eqref{alg1_ma}, we have
\begin{align}\label{inte_1}
\|A&_\infty\mb{x}(k+1)-\mb{1}_n \otimes \mb{x}^*\|_2 \nonumber\\
=&\begin{color}{black}~\left\|A_{\infty}\Big(A\mb{x}(k)-\eta \mb{y}(k)+B_{\infty}\mb{y}(k)(-\eta+\eta)\Big)-\mb{1}_n \otimes \mb{x}^*\right\|_2, \nonumber\end{color}\\
\leq&~\left\|\big((\mb{1}_n\bs{\pi}_r^\top) \otimes I_p\big)\mb{x}(k)-(\mb{1}_n\otimes I_p)\mb{x}^*-\eta A_{\infty}B_{\infty}\mb{y}(k)\right\|_2 \nonumber\\
&+\eta h\mn{A_\infty}_2\left\|\mb{y}(k)-B_{\infty}\mb{y}(k)\right\|_B.
\end{align}Since the last term above matches with the last term in Eq.~\eqref{lem5}, what is left is to manipulate the first term. Before we proceed, define~$\nabla F(k)=\nabla F\big((\bs{\pi}_r^\top\otimes I_p)\mb{x}(k)\big)$, which is the global gradient evaluated at~$(\bs{\pi}_r^\top\otimes I_p)\mb{x}(k)$. Note that
\begin{align*}
A_\infty B_\infty=(\mb{1}_n\bs{\pi}_r^\top\otimes I_p)(\bs{\pi}_c\mb{1}_n^\top\otimes I_p)=\bs{\pi}_r^\top\bs{\pi}_c(\mb{1}_n\mb{1}_n^\top\otimes I_p).
\end{align*}
We have the following:
{\color{black}\begin{align*} 
\|\big(&(\mb{1}_n\bs{\pi}_r^\top) \otimes I_p\big)\mb{x}(k)-(\mb{1}_n\otimes I_p)\mb{x}^*-\eta A_{\infty}B_{\infty}\mb{y}(k)\|_2\\
\leq&~\left\|(\mb{1}_n \otimes I_p)\Big((\bs{\pi}_r^\top\otimes I_p)\mb{x}(k)-\mb{x}^*-n\eta(\bs{\pi}_r^\top\bs{\pi}_c)\nabla F(k)\Big)\right\|_2\\  &+\eta(\bs{\pi}_r^\top\bs{\pi}_c)\left\|n(\mb{1}_n\otimes I_p)\nabla F(k)- (\mb{1}_n, \otimes I_p)(\mb{1}_n^\top\otimes I_p)\mb{y}(k)\right\|_2,\\
:=&~s_1 + \eta s_2.
\end{align*}}From Lemma~\ref{centr_d}, we have that if~$0<\eta<2/(n\beta\bs{\pi}_r^\top\bs{\pi}_c)$,
$$s_1\leq\lambda\|A_\infty\mb{x}(k)-\mb{1}_n \otimes \mb{x}^*\|_2.$$
Recall that~$(\mb{1}_n^\top \otimes I_p) \mb{y}(k) = (\mb{1}_n^\top \otimes I_p) \nabla\mb{f}(k),\forall k,$ from Lemma~\ref{sum_equ}, we have
	\begin{align*}
		s_2 \leq& n\beta g(\bs{\pi}_r^\top\bs{\pi}_c)\|\mb{x}(k)-A_\infty\mb{x}(k)\|_A.
	\end{align*}
The lemma follows by using the above bounds in Eq.~\eqref{inte_1}.
\end{proof}

Next, we develop a relation for~$\|\mb{y}(k+1)-B_\infty\mb{y}(k+1)\|_B$.
\begin{lem} \label{3}
The following inequality holds,~$\forall k$:
	\begin{align}
		\|\mb{y}&(k+1)-B_\infty\mb{y}(k+1)\|_B\nonumber\\ 
		\leq&~ \sigma_B\beta lg\mn{A-I_{np}}_2\|\mb{x}(k)-A_\infty\mb{x}(k)\|_A\nonumber\\
		&+~\sigma_B\|\mb{y}(k)-B_\infty\mb{y}(k)\|_B +~\eta\sigma_B\beta l\|\mb{y}(k)\|_2.
	\end{align}
\end{lem}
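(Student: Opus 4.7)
The plan is to start from the update rule~\eqref{alg1_mb} and exploit the identity $B_\infty B = B_\infty$, which follows from column-stochasticity of $\ul{B}$ (the left eigenvector $\mb{1}_n^\top$ gives $\mb{1}_n^\top \ul{B} = \mb{1}_n^\top$, hence $(\bs{\pi}_c\mb{1}_n^\top)\ul{B} = \bs{\pi}_c\mb{1}_n^\top$). This lets me write
\[
\mb{y}(k+1) - B_\infty \mb{y}(k+1) = (B - B_\infty)\mb{y}(k) + (B - B_\infty)\bigl(\nabla \mb{f}(k+1) - \nabla \mb{f}(k)\bigr),
\]
since both $\mb{y}(k+1)$ and $B_\infty \mb{y}(k+1)$ are obtained by applying $B$ and $B_\infty$ (respectively, post-multiplied by $B$) to the same bracketed quantity. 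After applying the triangle inequality in $\|\cdot\|_B$, the first term is bounded by $\sigma_B\|\mb{y}(k) - B_\infty \mb{y}(k)\|_B$ via Lemma~\ref{lem1}, matching the middle term of the claimed inequality.

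For the second piece, I would use the matrix-norm bound $\mn{B - B_\infty}_B = \sigma_B$ (established inside the proof of Lemma~\ref{lem1}) together with the compatibility between the vector norm $\|\cdot\|_B$ and its matrix counterpart, yielding
\[
\|(B - B_\infty)(\nabla \mb{f}(k+1) - \nabla \mb{f}(k))\|_B \leq \sigma_B\|\nabla \mb{f}(k+1) - \nabla \mb{f}(k)\|_B \leq \sigma_B\, l\,\|\nabla \mb{f}(k+1) - \nabla \mb{f}(k)\|_2,
\]
where the last step uses the norm-equivalence constant $l$. Lipschitz continuity of each $\nabla f_i$ (Assumption~\ref{asp2}(i)) then gives $\|\nabla\mb{f}(k+1) - \nabla\mb{f}(k)\|_2 \leq \beta\|\mb{x}(k+1) - \mb{x}(k)\|_2$.

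The remaining task is to bound $\|\mb{x}(k+1) - \mb{x}(k)\|_2$ in terms of $\|\mb{x}(k) - A_\infty\mb{x}(k)\|_A$ and $\|\mb{y}(k)\|_2$. Using Eq.~\eqref{alg1_ma} we have $\mb{x}(k+1) - \mb{x}(k) = (A - I_{np})\mb{x}(k) - \eta\mb{y}(k)$, and here the key algebraic trick is to note that $AA_\infty = A_\infty$ implies $(A - I_{np})A_\infty = 0$, hence $(A - I_{np})\mb{x}(k) = (A - I_{np})(\mb{x}(k) - A_\infty\mb{x}(k))$. Taking $\|\cdot\|_2$ and invoking $\|\cdot\|_2 \leq g\|\cdot\|_A$ then yields $\|\mb{x}(k+1) - \mb{x}(k)\|_2 \leq g\mn{A - I_{np}}_2\|\mb{x}(k) - A_\infty\mb{x}(k)\|_A + \eta\|\mb{y}(k)\|_2$. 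Chaining these bounds produces the two remaining terms in the lemma exactly.

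I expect the main subtle step to be the identity $(A - I_{np})A_\infty = 0$; without this observation one would only obtain a bound in terms of $\|\mb{x}(k)\|_2$, which is not good enough to close the contraction analysis later. The rest is routine: apply Lemma~\ref{lem1}, the triangle inequality, the Lipschitz bound, and the norm-equivalence constants $g$ and $l$ in the order described.
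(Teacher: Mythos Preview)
Your proposal is correct and follows essentially the same approach as the paper's proof: both apply Lemma~\ref{lem1} (via the identity $B_\infty B = B_\infty$) to split off the $\sigma_B\|\mb{y}(k)-B_\infty\mb{y}(k)\|_B$ term, then use Lipschitz continuity and the norm-equivalence constant $l$ to reduce to bounding $\|\mb{x}(k+1)-\mb{x}(k)\|_2$, and finally use the identity $(A-I_{np})A_\infty=0$ together with the constant $g$ to obtain the remaining two terms. The paper is simply more terse in citing ``because of Lemma~\ref{lem1}'' for the first step.
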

\begin{proof}
We note that
	\begin{align}\label{inte_2}
		\|\mb{y}&(k+1)-B_\infty\mb{y}(k+1)\|_B \nonumber\\
		=&~\left\|B\Big(\mb{y}(k)+\nabla \mb{f}(k+1)-\nabla \mb{f}(k)\Big)-B_{\infty}B\Big(\mb{y}(k)+\nabla \mb{f}(k+1)-\nabla \mb{f}(k)\Big)\right\|_B, \nonumber\\
		\leq&~{\sigma_B\|\mb{y}(k)-B_\infty\mb{y}(k)\|_B+\sigma_B\beta l\|\mb{x}(k+1)-\mb{x}(k)\|_2,}
	\end{align}
because of Lemma~\ref{lem1}. Now we analyze $\|\mb{x}(k+1)-\mb{x}(k)\|_2$.
	\begin{align}\label{xd}
		\|\mb{x}&(k+1)-\mb{x}(k)\|_2 \nonumber\\
		=&~ \|A\mb{x}(k)-\eta \mb{y}(k)-\mb{x}(k)\|_2, \nonumber\\
		=&~ \left\|(A-I_{np})\big(\mb{x}(k)-A_\infty\mb{x}(k)\big)-\eta\mb{y}(k)\right\|_2,\nonumber\\
		\leq&~ \mn{A-I_{np}}_2g\|\mb{x}(k)-A_\infty\mb{x}(k)\|_A+\eta\|\mb{y}(k)\|_2.
	\end{align}
	The lemma follows by plugging Eq.~\eqref{xd} into Eq.~\eqref{inte_2}.
\end{proof}

The last step is to bound $\|\mb{y}(k)\|_2$ in terms of~$\|\mb{x}(k)-A_\infty\mb{x}(k)\|_A$,~$\|A_\infty\mb{x}(k)-\mb{1}_n \otimes \mb{x}^*\|_2$, and~$\|\mb{y}(k)-B_\infty\mb{y}(k)\|_B$. Then we can replace~$\|\mb{y}(k)\|_2$ in Lemma~\ref{1}-\ref{3} by this bound to complete the contraction relationship.
\begin{lem} \label{4}
The following inequality holds,~$\forall k$:
	\begin{align*}
		\|\mb{y}(k)\|_2 \leq&~ g\beta\mn{B_{\infty}}_2\|\mb{x}(k)-A_\infty\mb{x}(k)\|_A \nonumber\\
		&+~ \beta\mn{B_{\infty}}_2\|A_\infty\mb{x}(k)-\mb{1}_n \otimes \mb{x}^*\|_2 +~h\|\mb{y}(k)-B_\infty\mb{y}(k)\|_B.
	\end{align*}
\end{lem}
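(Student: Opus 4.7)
The plan is to bound $\|\mb{y}(k)\|_2$ by splitting it into a ``deviation from the $B_\infty$-average'' part and an ``$B_\infty$-average'' part, and then to control the average via Lemma~\ref{sum_equ} together with the Lipschitz-continuity of the local gradients evaluated at the global minimizer~$\mb{x}^*$.

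First I would apply the triangle inequality,
$$\|\mb{y}(k)\|_2\leq \|\mb{y}(k)-B_\infty\mb{y}(k)\|_2+\|B_\infty\mb{y}(k)\|_2,$$
and bound the first term by the norm-equivalence inequality $\|\cdot\|_2\leq h\|\cdot\|_B$. This already supplies the $h\|\mb{y}(k)-B_\infty\mb{y}(k)\|_B$ term that appears in the claim, so the remainder of the work concerns only $\|B_\infty\mb{y}(k)\|_2$.

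The crucial observation is that, with $B_\infty=(\bs{\pi}_c\mb{1}_n^\top)\otimes I_p=(\bs{\pi}_c\otimes I_p)(\mb{1}_n^\top\otimes I_p)$, Lemma~\ref{sum_equ} yields $B_\infty\mb{y}(k)=B_\infty\nabla\mb{f}(k)$. Since $\mb{x}^*$ minimizes $F=\tfrac{1}{n}\sum_i f_i$, we have $\sum_{i=1}^n\nabla f_i(\mb{x}^*)=\bfzero$, which is equivalent to $B_\infty\nabla\mb{f}(\mb{1}_n\otimes\mb{x}^*)=\bfzero$. Inserting this zero gives
$$\|B_\infty\mb{y}(k)\|_2=\bigl\|B_\infty\bigl(\nabla\mb{f}(k)-\nabla\mb{f}(\mb{1}_n\otimes\mb{x}^*)\bigr)\bigr\|_2\leq \mn{B_\infty}_2\,\bigl\|\nabla\mb{f}(k)-\nabla\mb{f}(\mb{1}_n\otimes\mb{x}^*)\bigr\|_2.$$
Applying Assumption~\ref{asp2}(i) coordinate-wise (each block contributes $\beta^2\|\mb{x}_i(k)-\mb{x}^*\|_2^2$ and summing over $i$ gives the stacked 2-norm) produces
$$\bigl\|\nabla\mb{f}(k)-\nabla\mb{f}(\mb{1}_n\otimes\mb{x}^*)\bigr\|_2\leq \beta\,\|\mb{x}(k)-\mb{1}_n\otimes\mb{x}^*\|_2.$$

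Finally I would split $\mb{x}(k)-\mb{1}_n\otimes\mb{x}^*=\bigl(\mb{x}(k)-A_\infty\mb{x}(k)\bigr)+\bigl(A_\infty\mb{x}(k)-\mb{1}_n\otimes\mb{x}^*\bigr)$, use the triangle inequality, and on the first summand invoke $\|\cdot\|_2\leq g\|\cdot\|_A$ to pick up the factor $g$ on the $A$-norm term (the second summand is already measured in the $2$-norm). Combining everything gives exactly the stated bound. The only mildly non-obvious step is the ``insertion of zero'' trick in the middle paragraph: one must recognize that passing to $\nabla\mb{f}(k)$ via Lemma~\ref{sum_equ} is the right maneuver, because it is only after applying $B_\infty$ to $\nabla\mb{f}(\mb{1}_n\otimes\mb{x}^*)$ that the optimality condition $\sum_i\nabla f_i(\mb{x}^*)=\bfzero$ can be exploited.
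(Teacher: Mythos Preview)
Your proposal is correct and follows essentially the same route as the paper: split $\mb{y}(k)$ via $B_\infty$, invoke Lemma~\ref{sum_equ} to replace $B_\infty\mb{y}(k)$ by $B_\infty\nabla\mb{f}(k)$, insert the zero $\sum_i\nabla f_i(\mb{x}^*)=\bfzero$, apply Lipschitz continuity, and finally split through $A_\infty\mb{x}(k)$. The only cosmetic difference is that the paper first factors $B_\infty=(\bs{\pi}_c\otimes I_p)(\mb{1}_n^\top\otimes I_p)$, bounds the sum $\|\sum_i(\nabla f_i(\mb{x}_i(k))-\nabla f_i(\mb{x}^*))\|_2$ by $\beta\sum_i\|\mb{x}_i(k)-\mb{x}^*\|_2\leq\beta\sqrt{n}\|\mb{x}(k)-\mb{1}_n\otimes\mb{x}^*\|_2$, and then recombines $\sqrt{n}\|\bs{\pi}_c\|_2=\mn{B_\infty}_2$, whereas you pull out $\mn{B_\infty}_2$ directly and use the blockwise Lipschitz bound on the stacked vector; both paths land on exactly the same constant.
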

\begin{proof}
Recall that $B_{\infty}= (\bs{\pi}_c\otimes I_p)(\mb{1}_n^\top\otimes I_p).$ We have		
\begin{equation}\label{inte_3}
\|\mb{y}(k)\|_2 \leq h\|\mb{y}(k)-B_\infty\mb{y}(k)\|_B + \|B_\infty\mb{y}(k)\|_2.
\end{equation}
We next bound~$\|B_\infty\mb{y}(k)\|_2$:
\begin{align}\label{inte_4}
\|B_\infty\mb{y}(k)\|_2 =&~ \|(\bs{\pi}_c\otimes I_p)(\mb{1}_n^\top\otimes I_p)\mb{y}(k)\|_2\nonumber\\
=&~\|\bs{\pi}_c\|_2\|(\mb{1}_n^\top\otimes I_p)\nabla\mb{f}(k)\|_2 \nonumber\\
=&~ \|\bs{\pi}_c\|_2\left\|\sum_{i=1}^{n}\nabla f_i(\mb{x}_i(k))-\sum_{i=1}^{n}\nabla f_i(\mb{x}^*)\right\|_2 \nonumber\\
\leq&~ \|\bs{\pi}_c\|_2\beta\sum_{i=1}^{n}\|\mb{x}_i(k)-\mb{x}^*\|_2 \nonumber\\
\leq&~\|\bs{\pi}_c\|_2\beta\sqrt{n}\|\mb{x}(k)-\mb{1}_n\otimes \mb{x}^*\|_2, \nonumber\\
\leq&~\mn{B_{\infty}}_2\beta g\|\mb{x}(k)-A_\infty\mb{x}(k)\|_A +~\mn{B_{\infty}}_2\beta\|A_\infty\mb{x}(k)-\mb{1}_n \otimes \mb{x}^*\|_2,
\end{align}
where the second last inequality uses Jensen's inequality and the last inequality uses the fact that~$\mn{B_{\infty}}_2=\sqrt{n}\|\bs{\pi}_c\|_2$. The lemma follows by plugging Eqs.~\eqref{inte_4} into Eq.~\eqref{inte_3}.
\end{proof}

Before the main result, we present an additional lemma from nonnegative matrix theory.
\begin{lem}\label{rho}(Theorem 8.1.29 in~\cite{hornjohnson:13})
	Let $X\in\mathbb{R}^{n\times n}$ be a nonnegative matrix and~$\mb{x}\in\mathbb{R}^{n}$ be a positive vector. If~$X\mb{x}<\omega\mb{x}$, then~$\rho(X)<\omega$. 
\end{lem}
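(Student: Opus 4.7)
The plan is to iterate the strict inequality $X\mb{x}<\omega\mb{x}$ using the nonnegativity of $X$ to obtain an exponential bound on the powers $X^k$, and then extract $\rho(X)<\omega$ via Gelfand's spectral radius formula. No Perron--Frobenius machinery is needed.

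First, because $\mb{x}$ is strictly positive and the inequality $X\mb{x}<\omega\mb{x}$ is component-wise strict in a finite-dimensional space, I define
\[
\omega'\triangleq\max_{1\leq i\leq n}\frac{[X\mb{x}]_i}{[\mb{x}]_i},
\]
which is well defined and satisfies $\omega'<\omega$. By construction, $X\mb{x}\leq\omega'\mb{x}$ component-wise. I then argue by induction that $X^k\mb{x}\leq(\omega')^k\mb{x}$ for every $k\geq 0$. The inductive step uses the elementary fact that a nonnegative matrix is monotone as an operator on the nonnegative cone: if $\mb{u}\leq\mb{v}$ component-wise, then $X\mb{u}\leq X\mb{v}$. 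Applying $X$ to $X^{k-1}\mb{x}\leq(\omega')^{k-1}\mb{x}$ and combining with the base bound gives the next iterate.

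Next, I extend the bound from $\mb{x}$ to arbitrary vectors. Let $\mu\triangleq\min_i[\mb{x}]_i>0$ and $M\triangleq\max_i[\mb{x}]_i$. For any $\mb{v}\in\mathbb{R}^n$, the entrywise absolute value satisfies $|\mb{v}|\leq(\|\mb{v}\|_\infty/\mu)\,\mb{x}$, so since $X\geq 0$,
\[
|X^k\mb{v}|\;\leq\;X^k|\mb{v}|\;\leq\;\frac{\|\mb{v}\|_\infty}{\mu}\,X^k\mb{x}\;\leq\;\frac{\|\mb{v}\|_\infty}{\mu}(\omega')^k\mb{x},
\]
which yields $\|X^k\mb{v}\|_\infty\leq(M/\mu)(\omega')^k\|\mb{v}\|_\infty$, and therefore the induced operator norm obeys $\|X^k\|_\infty\leq(M/\mu)(\omega')^k$.

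Finally, I invoke Gelfand's formula $\rho(X)=\lim_{k\to\infty}\|X^k\|_\infty^{1/k}$ to conclude $\rho(X)\leq\omega'<\omega$. The one nontrivial step, such as it is, is the passage from the component-wise strict inequality $X\mb{x}<\omega\mb{x}$ to a uniform multiplicative gap $\omega'<\omega$; this step crucially relies on both the positivity of $\mb{x}$ and the finiteness of $n$ to ensure the maximum in the definition of $\omega'$ is achieved and strictly below $\omega$. The remainder is routine bookkeeping for nonnegative-matrix iteration.
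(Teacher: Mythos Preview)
Your argument is correct. The paper itself does not prove this lemma at all; it simply quotes it as Theorem~8.1.29 of Horn and Johnson and uses it as a black box in the proof of Theorem~\ref{thm1}. So there is no ``paper's own proof'' to compare against.

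For completeness: your route---tightening the strict inequality to $X\mb{x}\leq\omega'\mb{x}$ with $\omega'<\omega$, iterating via monotonicity of a nonnegative matrix on the cone, sandwiching an arbitrary vector between multiples of $\mb{x}$, and closing with Gelfand's formula---is a standard elementary proof of this fact and is fully rigorous as written. The only place one could nitpick is the implicit use of $X^k\geq 0$ (so that $|X^k\mb{v}|\leq X^k|\mb{v}|$), but that is immediate since products of nonnegative matrices are nonnegative. Nothing is missing.
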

\subsection{Main results}
With the help of auxiliary relations developed in the previous subsection, we now present the main result, which establishes the geometric convergence of the proposed algorithm.
\begin{theorem}\label{thm1} If~$0<\eta<\frac{2}{n\beta\bs{\pi}_r^\top\bs{\pi}_c}$, we have the following linear matrix inequality (entry-wise):
	\begin{equation} \label{G}
		\mb{t}(k+1) \leq J(\eta)\mb{t}(k),~\forall k,
	\end{equation}
	where $\mb{t}(k)\in\mathbb{R}^3$ and $J(\eta)\in\mathbb{R}^{3\times3}$ are defined as follows: 
	\begin{align}\label{t,G}
	\mb{t}(k)&=\left[
	\begin{array}{l}
	\left\|\mb{x}(k)-A_\infty\mb{x}(k)\right\|_A \\
	\left\|A_\infty\mb{x}(k)-\mb{1}_n \otimes \mb{x}^*\right\|_2 \\
	\left\|\mb{y}(k)-B_\infty\mb{y}(k)\right\|_B
	\end{array}
	\right],\\
	J(\eta)&=\left[
	\begin{array}{ccc}
	\sigma_A+a_1\eta & a_2\eta &a_3\eta \\
	a_4\eta & \lambda & a_5\eta\\
	a_6+a_7\eta& a_8\eta & \sigma_B+a_{9}\eta
	\end{array}
	\right],
	\end{align}
	with the positive constants $a_i$'s being
	{\color{black}
	\begin{eqnarray*}
	a_1 &=& mg\beta\mn{I_{np}-A_\infty}_2\mn{B_{\infty}}_2, \nonumber\\
	a_2 &=& m\beta\mn{I_{np}-A_\infty}_2\mn{B_{\infty}}_2, \nonumber\\
	a_3 &=& mh\mn{I_{np}-A_\infty}_2, \nonumber\\
	a_4 &=& \begin{color}{black}n\end{color}\beta g(\bs{\pi}_r^\top\bs{\pi}_c), \nonumber\\
	a_5 &=& h\mn{A_{\infty}}_2,\nonumber\\
	a_6 &=& g\sigma_Bl\beta \mn{A-I_{np}}_2,\nonumber\\
	a_7 &=& g\sigma_Bl\beta^2\mn{B_{\infty}}_2,\nonumber\\
	a_8 &=& \sigma_Bl\beta^2 \mn{B_{\infty}}_2,\nonumber\\
	a_{9} &=& h\sigma_Bl\beta.
	\end{eqnarray*}
}When the step-size,~$\eta$, satisfies
\begin{align}
	\eta<\min\left\{\frac{\epsilon_1(1-\sigma_A)}{a_1\epsilon_1+a_2\epsilon_2+a_3\epsilon_3},~\frac{(1-\sigma_B)\epsilon_3-\epsilon_1a_6}{a_7\epsilon_1+a_8\epsilon_2+a_9\epsilon_3},~\frac{1}{n\beta\bs{\pi}_r^\top\bs{\pi}_c}\right\},
\end{align} 
where~$\epsilon_1,\epsilon_2,\epsilon_3$ are positive constants such that
\begin{align}
\epsilon_3> 0,\qquad\epsilon_1 < \frac{(1-\sigma_B)\epsilon_3}{a_6}, \qquad 
\epsilon_2 > \frac{a_4\epsilon_1+a_5\epsilon_3}{\alpha n(\bs{\pi}_r^\top\bs{\pi}_c)}, 
\end{align}
the spectral radius of $J(\eta)$,~$\rho(J(\eta))$, is strictly less than~$1$, and therefore~$\left\|\mb{x}(k)-\mb{1}_n \otimes \mb{x}^*\right\|_2$ converges to zero geometrically at the rate of~$O(\rho(J(\eta))^k)$.
\end{theorem}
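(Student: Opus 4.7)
The plan is to prove the theorem in three stages: first, derive the $3\times 3$ linear matrix inequality in Eq.~\eqref{G} by composing Lemmas~\ref{1}--\ref{4}; second, invoke Lemma~\ref{rho} to conclude $\rho(J(\eta))<1$ under the stated step-size conditions; third, transfer the geometric contraction of $\mathbf{t}(k)$ to the target error $\|\mathbf{x}(k)-\mathbf{1}_n\otimes\mathbf{x}^*\|_2$.

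For the first stage, I would substitute the bound on $\|\mathbf{y}(k)\|_2$ from Lemma~\ref{4} into the right-hand sides of Lemmas~\ref{1} and~\ref{3}; Lemma~\ref{2} is already expressed in the three coordinates of $\mathbf{t}(k)$ and needs no substitution. Collecting the resulting coefficients yields exactly the entries $a_1,\dots,a_9$ given in the theorem, with the diagonal contractions $\sigma_A$ and $\sigma_B$ inherited from Lemma~\ref{lem1} and the $(2,2)$ entry $\lambda$ inherited from the centralized gradient-descent step of Lemma~\ref{centr_d}. This stage is essentially bookkeeping.

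The crux is the second stage. By Lemma~\ref{rho}, to show $\rho(J(\eta))<1$ it suffices to exhibit a positive vector $\boldsymbol{\epsilon}=[\epsilon_1,\epsilon_2,\epsilon_3]^\top$ with $J(\eta)\boldsymbol{\epsilon}<\boldsymbol{\epsilon}$ entry-wise. The three resulting row inequalities read
\begin{align*}
\eta(a_1\epsilon_1+a_2\epsilon_2+a_3\epsilon_3) &< (1-\sigma_A)\epsilon_1,\\
a_4\eta\epsilon_1+a_5\eta\epsilon_3 &< (1-\lambda)\epsilon_2,\\
a_6\epsilon_1+\eta(a_7\epsilon_1+a_8\epsilon_2+a_9\epsilon_3) &< (1-\sigma_B)\epsilon_3.
\end{align*}
For $\eta<1/(n\beta\boldsymbol{\pi}_r^\top\boldsymbol{\pi}_c)$, one checks that $\lambda=1-\alpha n\eta(\boldsymbol{\pi}_r^\top\boldsymbol{\pi}_c)$, so the factor $\eta$ cancels in the middle inequality, leaving the $\eta$-free condition $\epsilon_2>(a_4\epsilon_1+a_5\epsilon_3)/(\alpha n \boldsymbol{\pi}_r^\top\boldsymbol{\pi}_c)$. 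The third inequality forces $a_6\epsilon_1<(1-\sigma_B)\epsilon_3$ so that its $\eta$-bound is meaningful. The natural ordering is therefore to fix $\epsilon_3>0$ arbitrarily, pick $\epsilon_1<(1-\sigma_B)\epsilon_3/a_6$, and then choose $\epsilon_2$ above the required lower bound; the first and third rows then translate directly into the two $\eta$-bounds appearing in the theorem, and intersecting with $\eta<1/(n\beta\boldsymbol{\pi}_r^\top\boldsymbol{\pi}_c)$ yields the stated minimum.

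For the third stage, once $\rho(J(\eta))<1$, iterating Eq.~\eqref{G} gives $\mathbf{t}(k)\to\mathbf{0}$ at rate $O(\rho(J(\eta))^k)$. A single triangle inequality together with the norm equivalence $\|\cdot\|_2\leq g\|\cdot\|_A$ yields $\|\mathbf{x}(k)-\mathbf{1}_n\otimes\mathbf{x}^*\|_2\leq g\,[\mathbf{t}(k)]_1+[\mathbf{t}(k)]_2$, which transfers the geometric rate to the desired error. The main obstacle throughout is the careful coupled ordering of $\epsilon_1,\epsilon_2,\epsilon_3$ in the second stage, together with the verification that $\lambda$ collapses to the clean linear form $1-\alpha n\eta(\boldsymbol{\pi}_r^\top\boldsymbol{\pi}_c)$ in the relevant $\eta$-regime --- this is precisely why the preliminary bound $\eta<2/(n\beta\boldsymbol{\pi}_r^\top\boldsymbol{\pi}_c)$ from Lemma~\ref{2} must be strengthened to $\eta<1/(n\beta\boldsymbol{\pi}_r^\top\boldsymbol{\pi}_c)$ in the final step-size selection.
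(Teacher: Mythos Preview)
Your proposal is correct and follows essentially the same approach as the paper: combine Lemmas~\ref{1}--\ref{4} to obtain the linear matrix inequality, reduce $\lambda$ to $1-\alpha n\eta(\bs{\pi}_r^\top\bs{\pi}_c)$ in the regime $\eta<1/(n\beta\bs{\pi}_r^\top\bs{\pi}_c)$, and then apply Lemma~\ref{rho} by solving $J(\eta)\bs{\epsilon}<\bs{\epsilon}$ row by row with the same ordering of $\epsilon_3,\epsilon_1,\epsilon_2$. Your third stage, making explicit the transfer of the rate to $\|\mb{x}(k)-\mb{1}_n\otimes\mb{x}^*\|_2$ via $g[\mb{t}(k)]_1+[\mb{t}(k)]_2$, is a detail the paper leaves implicit but is a welcome addition.
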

\begin{proof}
Combining the results of Lemmas~\ref{1}--\ref{4}, one can verify that Eq.~\eqref{G} holds if~$0<\eta<\frac{2}{n\beta\bs{\pi}_r^\top\bs{\pi}_c}$. Recall that~$\lambda=\max\left(\left|1-\alpha n\eta(\bs{\pi}_r^\top\bs{\pi}_c)\right|,\left|1-\beta n\eta(\bs{\pi}_r^\top\bs{\pi}_c) \right|\right)$. When~$0<\eta<\frac{1}{n\beta\bs{\pi}_r^\top\bs{\pi}_c}$,~$\lambda=1-\alpha n\eta(\bs{\pi}_r^\top\bs{\pi}_c)$, since $\alpha\leq\beta$; see, e.g.,~\cite{opt_literature0} for details. 
The goal is to find an upper bound of the step-size,~$\widetilde{\eta}$, such that~$\rho(J(\eta))<1$ when~$\eta<\widetilde{\eta}.$ In the light of Lemma~\ref{rho}, we solve for the range of the step-size,~$\eta$, and a positive vector~$\bs{\epsilon}=\left[\epsilon_1,\epsilon_2,\epsilon_3\right]^\top$ 
from the following linear matrix inequality (entry-wise):
\begin{align}\label{eta1}
	\left[
	\begin{array}{ccc}
	\sigma_A+a_1\eta & a_2\eta &a_3\eta \\
	a_4\eta & 1-\alpha n\eta(\bs{\pi}_r^\top\bs{\pi}_c) & a_5\eta\\
	a_6+a_7\eta& a_8\eta & \sigma_B+a_{9}\eta
	\end{array}
	\right]
	\left[
	\begin{array}{ccc}
	\epsilon_1 \\
	\epsilon_2\\
	\epsilon_3
	\end{array}
	\right]
	<
	\left[
	\begin{array}{ccc}
	\epsilon_1 \\
	\epsilon_2\\
	\epsilon_3
	\end{array}
	\right],
\end{align}
which is equivalent to the following set of inequalities: 
\begin{align}
\left\{
\begin{array}{lll}
(a_1\epsilon_1+a_2\epsilon_2+a_3\epsilon_3)\eta&<&\epsilon_1(1-\sigma_A), \\
(a_4\epsilon_1-\alpha n(\bs{\pi}_r^\top\bs{\pi}_c)\epsilon_2+a_5\epsilon_3)\eta&<&0, \\
(a_7\epsilon_1+a_8\epsilon_2+a_9\epsilon_3)\eta&<&(1-\sigma_B)\epsilon_3-\epsilon_1a_6,
\end{array} \nonumber
\right.
\end{align}
Solving the inequalities above, we have that when
\begin{align}
\left\{
\begin{array}{lll}
\epsilon_1 &<& \frac{(1-\sigma_B)\epsilon_3}{a_6}, \\
\epsilon_2 &>& \frac{a_4\epsilon_1+a_5\epsilon_3}{\alpha n(\bs{\pi}_r^\top\bs{\pi}_c)}, \\
\epsilon_3 &>& 0, \\
\eta &<& \min\left\{\frac{\epsilon_1(1-\sigma_A)}{a_1\epsilon_1+a_2\epsilon_2+a_3\epsilon_3},\frac{(1-\sigma_B)\epsilon_3-\epsilon_1a_6}{a_7\epsilon_1+a_8\epsilon_2+a_9\epsilon_3}\right\},
\end{array} \nonumber
\right.
\end{align}
the inequality in Eq.~\eqref{eta1} holds and the Theorem follows.
\end{proof}

\section{Numerical Experiments}\label{s4}
We consider a binary classification problem in the distributed setting, where we use logistic loss function to train a linear classifier. Each agent~$i$ has access to~$m_i$ training data,~$(\mb{c}_{ij},y_{ij})\in\mathbb{R}^p\times\{-1,+1\}$, where~$\mb{c}_{ij}$ contains~$p$ features of the~$j$th training data at agent~$i$ and~$y_{ij}$ is the corresponding binary label. For privacy issues, agents do not share training data with each other. In order to use the entire data set for training, the network of agents cooperatively solves the following distributed logistic regression problem:
\begin{align}
\underset{\mb{w}\in\mbb{R}^p,b\in\mathbb{R}}{\operatorname{min}}F(\mb{w},b)
=\sum_{i=1}^n\sum_{j=1}^{m_i}\ln\left[1+\exp\left(-\left(\mb{w}^\top\mb{c}_{ij}+b\right)y_{ij}\right)\right] \nonumber
+\frac{\xi}{2}\|\mb{w}\|_2^2\nonumber,
\end{align}where the private function at each agent,~$i$, is given by:
\[
f_i(\mb{w},b)=\sum_{j=1}^{m_i}\ln\left[1+\exp\left(-\left(\mb{w}^\top\mb{c}_{ij}+b\right)y_{ij}\right)\right] 
+\frac{\xi}{2n}\|\mb{w}\|_2^2.
\]
In our setting,{\color{black}~$n=8$},~$p=5$. The feature vectors,~$\mb{c}_{ij}$'s, are Gaussian with zero mean and variance~$2$. The binary labels are randomly generated from standard Bernoulli distribution. We first compare the performance of the proposed algorithm in this paper, with ADD-OPT/Push-DIGing~\cite{add-opt,opdirect_nedicLinear}, FROST~\cite{xin2018fast}, and subgradient-push{\color{black}~\cite{opdirect_Tsianous,opdirect_Nedic}}, over the leftmost directed graph,~$\mc{G}_1$, shown in Fig.~\ref{g}. The simulation results are shown in the left figure in Fig.~\ref{s}. Next, we evaluate the proposed algorithm on the three different directed graphs,~$\mc{G}_1,\mc{G}_2,\mc{G}_3$, shown in Fig.~\ref{g}, where each graph to the right has a few more edges compared to the one on its left. The simulation results are shown in the right figure in Fig.~\ref{s}. In both cases, we plot the average of the residuals at each agent,{\color{black}~$\frac{1}{n}\sum_{i=1}^{n}\|\mb{x}_i(k)-\mb{x}^*\|_2$}. We note that the proposed linear algorithm achieves a  geometric (linear on the log-scale) convergence speed comparable to other fast algorithms over directed graphs but with less computation and communication. These simulations confirm the theoretical findings in this letter. 

\begin{figure}[!h]
\centering
\subfigure{\includegraphics[width=1.8in]{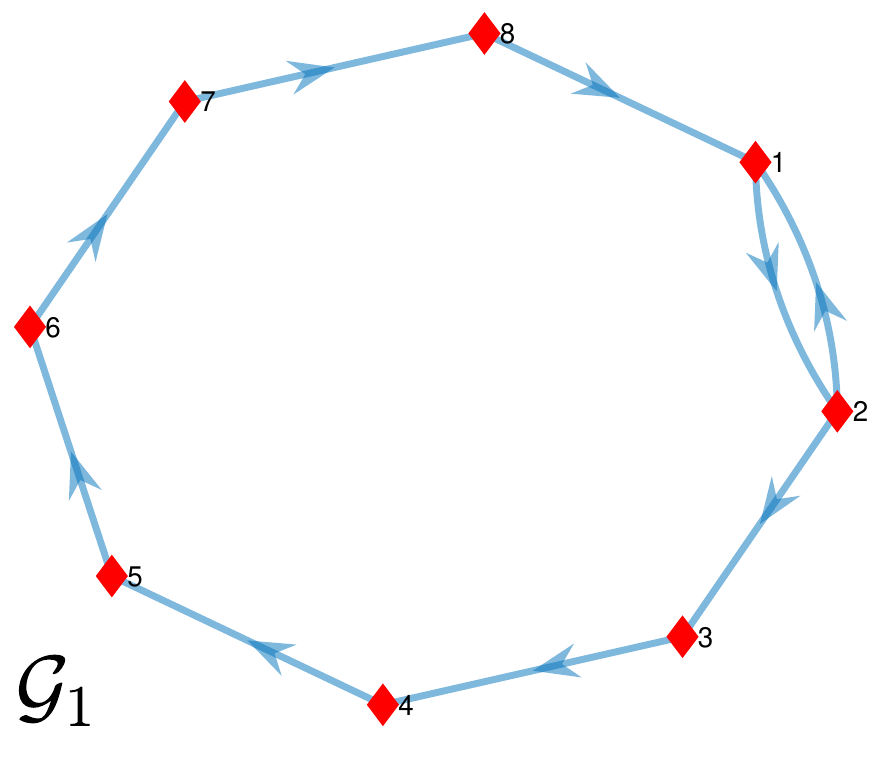}}
\hspace{0.5cm}
\subfigure{\includegraphics[width=1.8in]{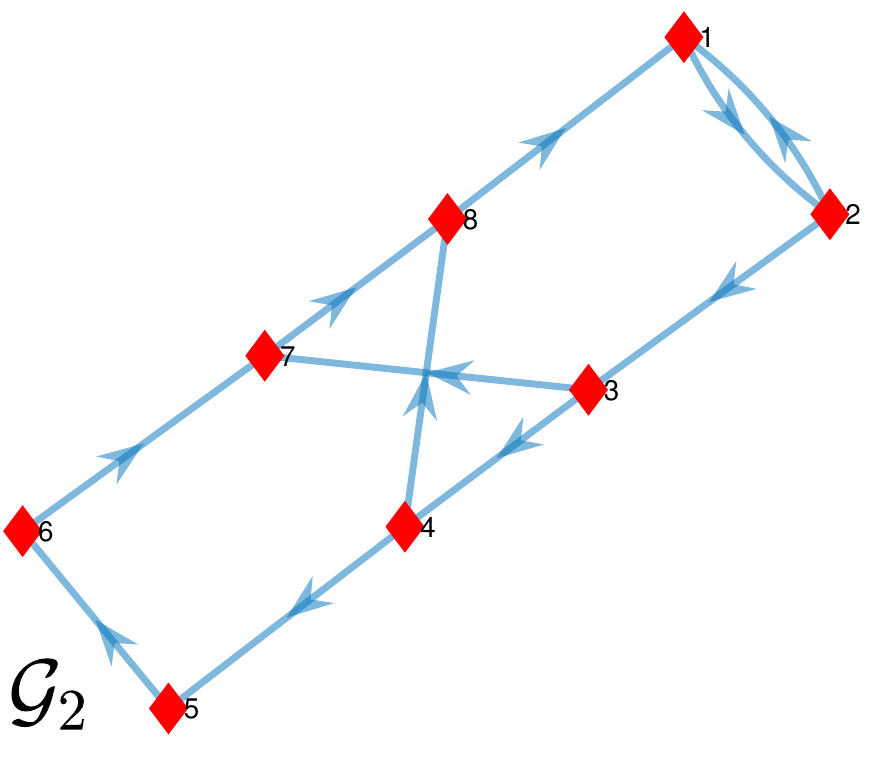}}
\subfigure{\includegraphics[width=1.8in]{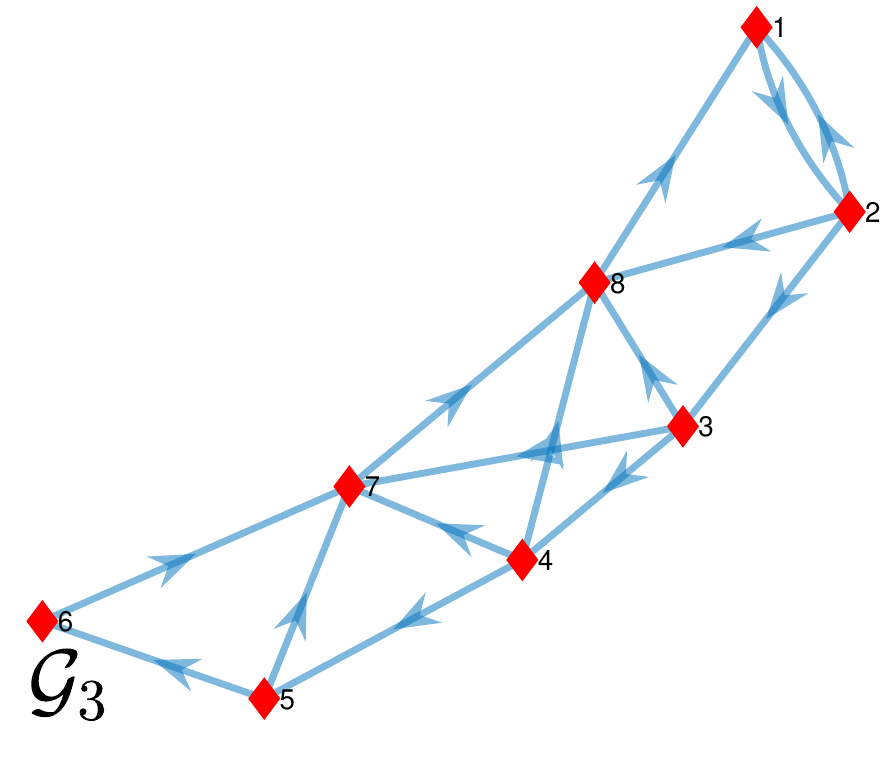}}
\caption{Strongly-connected but unbalanced directed graphs.}
\label{g}
\end{figure}

\begin{figure}[!h]
\centering
\subfigure{\includegraphics[width=2.5in]{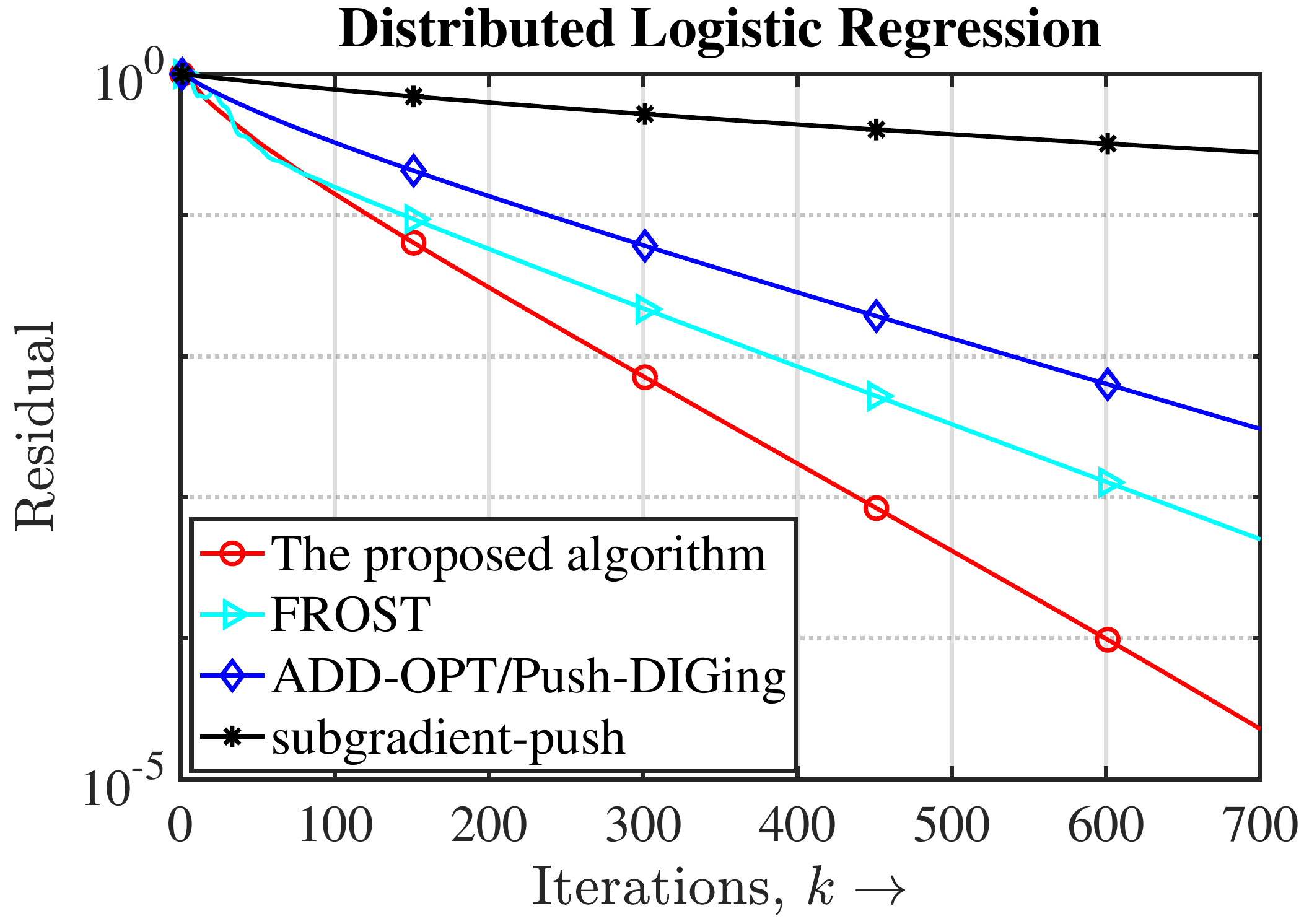}}
\hspace{0.5cm}
\subfigure{\includegraphics[width=2.5in]{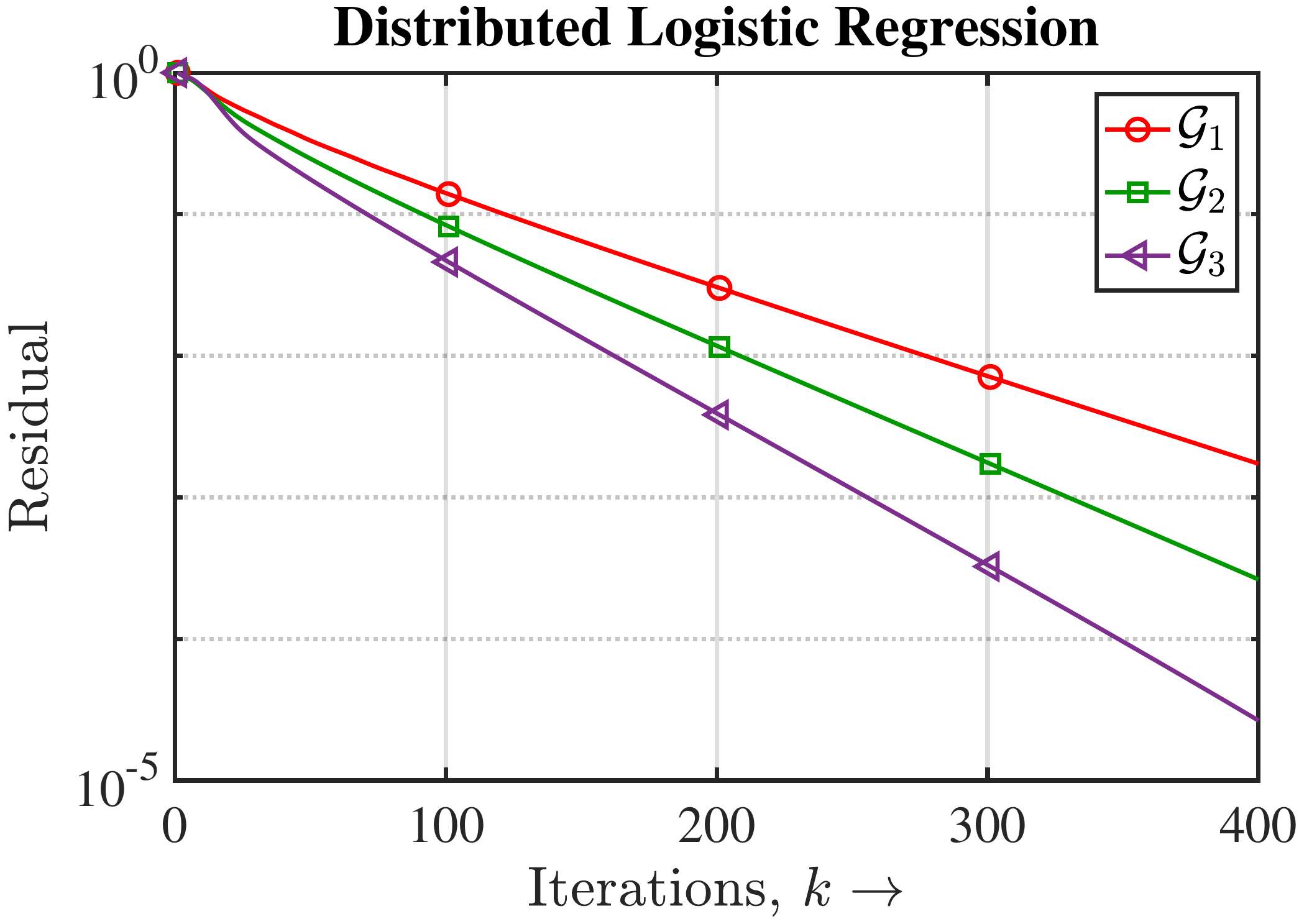}}
\caption{(Left) Comparison across different algorithms. (Right) Proposed algorithm over different graphs.we plot the average residuals at each agent,{\color{black}~$\frac{1}{n}\sum_{i=1}^{n}\|\mb{x}_i(k)-\mb{x}^*\|_2$}.}
\label{s}
\end{figure}

\section{Conclusions}\label{s5}
In this letter, we describe a linear distributed algorithm for optimization over directed graphs that can be seen as a generalization of earlier work over undirected graphs. Under the assumptions that the objective functions are strongly-convex and have Lipschitz-continuous gradients, the proposed algorithm achieves a geometric convergence to the global optimal. Our analysis is based on a novel approach where we establish simultaneous contractions of both row- and column-stochastic matrices under some arbitrary norms. We then use an elegant result from nonnegative matrix theory to develop the conditions for convergence.

\bibliographystyle{IEEEbib}
\bibliography{sample}

\begin{thebibliography}{10}

\bibitem{uc_Nedic}
A.~Nedi\'{c} and A.~Ozdaglar,
\newblock ``Distributed subgradient methods for multi-agent optimization,''
\newblock {\em IEEE Trans. on Automatic Control}, vol. 54, no. 1, pp. 48--61,
  Jan. 2009.

\bibitem{DGD_Yuan}
K.~Yuan, Q.~Ling, and W.~Yin,
\newblock ``On the convergence of decentralized gradient descent,''
\newblock {\em SIAM Journal on Optimization}, vol. 26, no. 3, pp. 1835--1854,
  Sep. 2016.

\bibitem{EXTRA}
W.~Shi, Q.~Ling, G.~Wu, and W~Yin,
\newblock ``Extra: An exact first-order algorithm for decentralized consensus
  optimization,''
\newblock {\em SIAM Journal on Optimization}, vol. 25, no. 2, pp. 944--966,
  2015.

\bibitem{xu2015augmented}
J.~Xu, Sj~Zhu, Yj~Cj Soh, and L.~Xie,
\newblock ``Augmented distributed gradient methods for multi-agent optimization
  under uncoordinated constant stepsizes,''
\newblock in {\em IEEE 54th Annual Conference on Decision and Control}, 2015,
  pp. 2055--2060.

\bibitem{Augmented_EXTRA}
G.~Qu and N.~Li,
\newblock ``Harnessing smoothness to accelerate distributed optimization,''
\newblock {\em IEEE Trans. on Control of Network Systems}, Apr. 2017.

\bibitem{xu2018convergence}
Jinming Xu, Shanying Zhu, Yeng~Chai Soh, and Lihua Xie,
\newblock ``Convergence of asynchronous distributed gradient methods over
  stochastic networks,''
\newblock {\em IEEE Transactions on Automatic Control}, vol. 63, no. 2, pp.
  434--448, 2018.

\bibitem{GQu_nesterov}
G.~Qu and N.~Li,
\newblock ``{Accelerated distributed Nesterov gradient descent},''
\newblock {\em Arxiv: https://arxiv.org/abs/1705.07176}, May 2017.

\bibitem{zhu2010discrete}
M.~Zhu and S.~Mart{\'\i}nez,
\newblock ``Discrete-time dynamic average consensus,''
\newblock {\em Automatica}, vol. 46, no. 2, pp. 322--329, 2010.

\bibitem{6119236}
J.~F.~C. Mota, J.~M.~F. Xavier, P.~M.~Q. Aguiar, and M.~P\"{u}schel,
\newblock ``Distributed basis pursuit,''
\newblock {\em IEEE Transactions on Signal Processing}, vol. 60, no. 4, pp.
  1942--1956, Apr. 2012.

\bibitem{jakovetic2017unification}
Dusan Jakovetic,
\newblock ``A unification, generalization, and acceleration of exact
  distributed first order methods,''
\newblock {\em arXiv preprint arXiv:1709.01317}, 2017.

\bibitem{RajaBajwa.ITSP16}
H.~Raja and W.~U. Bajwa,
\newblock ``Cloud {K-SVD}: {A} collaborative dictionary learing algorithm for
  big, distributed data,''
\newblock {\em IEEE Trans. Signal Processing}, vol. 64, no. 1, pp. 173--188,
  Jan. 2016.

\bibitem{8123915}
S.~Lee and M.~M. Zavlanos,
\newblock ``Approximate projection methods for decentralized optimization with
  functional constraints,''
\newblock {\em IEEE Transactions on Automatic Control}, 2017.

\bibitem{8264076}
F.~Mansoori and E.~Wei,
\newblock ``{Superlinearly convergent asynchronous distributed network Newton
  method},''
\newblock in {\em 56th IEEE Annual Conference on Decision and Control}, Dec.
  2017, pp. 2874--2879.

\bibitem{YING2018253}
B.~Ying and A.~H. Sayed,
\newblock ``{Performance limits of stochastic sub-gradient learning, part II:
  Multi-agent case},''
\newblock {\em Signal Processing}, vol. 144, pp. 253--264, Mar. 2018.

\bibitem{gharesifard2012distributed}
Bahman Gharesifard and Jorge Cort{\'e}s,
\newblock ``Distributed strategies for generating weight-balanced and doubly
  stochastic digraphs,''
\newblock {\em European Journal of Control}, vol. 18, no. 6, pp. 539--557,
  2012.

\bibitem{opdirect_Tsianous}
K.~I. Tsianos, S.~Lawlor, and M.~G. Rabbat,
\newblock ``Push-sum distributed dual averaging for convex optimization,''
\newblock in {\em 51st IEEE Annual Conference on Decision and Control}, Maui,
  Hawaii, Dec. 2012, pp. 5453--5458.

\bibitem{opdirect_Nedic}
A.~Nedi\'{c} and A.~Olshevsky,
\newblock ``Distributed optimization over time-varying directed graphs,''
\newblock {\em IEEE Trans. on Automatic Control}, vol. 60, no. 3, pp. 601--615,
  Mar. 2015.

\bibitem{ac_directed0}
D.~Kempe, A.~Dobra, and J.~Gehrke,
\newblock ``Gossip-based computation of aggregate information,''
\newblock in {\em 44th Annual IEEE Symposium on Foundations of Computer
  Science}, Oct. 2003, pp. 482--491.

\bibitem{D-DGD}
C.~Xi, Q.~Wu, and U.~A. Khan,
\newblock ``On the distributed optimization over directed networks,''
\newblock {\em Neurocomputing}, vol. 267, pp. 508--515, Dec. 2017.

\bibitem{D-DPS}
C.~Xi and U.~A. Khan,
\newblock ``Distributed subgradient projection algorithm over directed
  graphs,''
\newblock {\em IEEE Trans. on Automatic Control}, vol. 62, no. 8, pp.
  3986--3992, Oct. 2016.

\bibitem{ac_Cai1}
K.~Cai and H.~Ishii,
\newblock ``Average consensus on general strongly connected digraphs,''
\newblock {\em Automatica}, vol. 48, no. 11, pp. 2750 -- 2761, 2012.

\bibitem{DEXTRA}
C.~Xi and U.~A. Khan,
\newblock ``{DEXTRA: A} fast algorithm for optimization over directed graphs,''
\newblock {\em IEEE Trans. on Automatic Control}, vol. 62, no. 10, pp.
  4980--4993, Oct. 2017.

\bibitem{add-opt}
C.~Xi, R.~Xin, and U.~A. Khan,
\newblock ``{ADD-OPT}: Accelerated distributed directed optimization,''
\newblock {\em IEEE Trans. on Automatic Control}, Aug. 2017,
\newblock \textit{in press}.

\bibitem{opdirect_nedicLinear}
A.~Nedi\'{c}, A.~Olshevsky, and W.~Shi,
\newblock ``Achieving geometric convergence for distributed optimization over
  time-varying graphs,''
\newblock {\em SIAM Journal of Optimization}, Dec. 2017.

\bibitem{linear_row}
C.~Xi, V.~S. Mai, R.~Xin, E.~Abed, and U.~A. Khan,
\newblock ``Linear convergence in optimization over directed graphs with
  row-stochastic matrices,''
\newblock {\em IEEE Trans. on Automatic Control}, Jan. 2018,
\newblock \textit{in press}.

\bibitem{xin2018fast}
R.~Xin, C.~Xi, and U.~A. Khan,
\newblock ``{FROST -- Fast row-stochastic optimization with uncoordinated
  step-sizes},''
\newblock {\em Arxiv: https://arxiv.org/abs/1803.09169}, Mar. 24th 2018.

\bibitem{bullo_book}
F.~Bullo, J.~Cortes, and S.~Martinez,
\newblock {\em Distributed Control of Robotic Networks},
\newblock Princeton University: Applied Mathematics Series, 2009.

\bibitem{hornjohnson:13}
R.~A. Horn and C.~R. Johnson,
\newblock {\em Matrix Analysis, 2$^{\mbox{\scriptsize nd}}$ ed.},
\newblock Cambridge University Press, New York, NY, 2013.

\bibitem{opt_literature0}
S.~Bubeck,
\newblock ``Convex optimization: Algorithms and complexity,''
\newblock {\em arXiv preprint arXiv:1405.4980}, 2014.

\end{thebibliography}

\end{document}